\documentclass[11pt]{article}

\usepackage[margin=1.15in]{geometry}
\usepackage{amsmath,amssymb,amsthm,mathtools}
\usepackage{booktabs}
\usepackage{enumitem}
\usepackage{microtype}
\usepackage[hidelinks]{hyperref}
\usepackage{xurl}

\newtheorem{theorem}{Theorem}[section]
\newtheorem{proposition}[theorem]{Proposition}
\newtheorem{lemma}[theorem]{Lemma}
\newtheorem{corollary}[theorem]{Corollary}
\newtheorem{remark}[theorem]{Remark}
\newtheorem{definition}[theorem]{Definition}

\newcommand{\Prob}{\mathbb P}

\newcommand{\Z}{\mathbb Z}

\newcommand{\eps}{\varepsilon}
\newcommand{\wc}{\mathrm{wc}}
\newcommand{\prodprob}{\mathrm{prod}}
\newcommand{\EMPTY}{\bot}
\newcommand{\score}{\mathsf S}

\newcommand{\StackableAt}{\mathsf{StackableAt}}

\title{A Fixed-Offset Transition for Random Stackability on Paths}
\author{John Fairfax-Ball}
\date{September 2026}

\begin{document}
\maketitle

\begin{abstract}
We study a support-collapse version of graph pebbling on the path.  A
configuration is \emph{stackable} if a sequence of legal pebbling moves can
produce a nonzero configuration supported on a single vertex.  On the path
$P_n$, choose a configuration uniformly from all weak compositions of the
fixed total $n\mu_n$, where $\mu_n$ is a positive integer.  Set
\[
 c_n=\sqrt{\log_2 n}-\frac12\log_2\log_2 n+\log_2(3e).
\]
We prove a two-sided fixed-offset transition: for every fixed $\eps>0$, the
stackability probability tends to $0$ if
$\log_2\mu_n\le c_n-\eps$ eventually, and tends to $1$ if
$\log_2\mu_n\ge c_n+\eps$ eventually.  No assertion is made at zero offset.
The proof starts from an exact recursive stackability score on trees, reduces
path messages to a one-dimensional integer recurrence, and identifies rare
dyadic deficit excursions.  Classical binary-partition asymptotics give the
one-sided excursion rate
\[
 -\log_2 q_\mu
 =x^2+2x\log_2x-2\log_2(3e)x+o(x),\qquad x=\log_2\mu.
\]
A constant-cost regeneration step turns this local estimate into a spatial
front mechanism, while an exact deep-message necessity theorem controls the
opposite side.  Conditioning independent geometric occupancies on their sum
then returns the uniform fixed-total model.  The finite deterministic
necessity theorem and its fixed-total corollary are formalised in Lean and are
registered with Palomar; the asymptotic transition theorem itself is not part
of that registration.
\end{abstract}

\section{Introduction}

A pebbling move on a graph removes two pebbles from one vertex and places one
pebble on an adjacent vertex.  Because every move dissipates one pebble, the
question of whether a configuration can be concentrated onto a single vertex
has a strong dependence on geometry.  Csern\'ak and Soukup use
\emph{stackable} for precisely this support-collapse event and study the
corresponding deterministic worst-case stacking number~\cite{CsernakSoukup2026}.
The present paper asks a different question: how likely is a uniformly random
fixed-total configuration to be stackable?

The probability space is the uniform multiset, or weak-composition, model that
is standard in random graph pebbling.  It is the same allocation law used in
the classical threshold literature~\cite{CzygrinowEtAl2002,BekmetjevEtAl2003}
and in the recent sharp work of Bushaw and Kettle~\cite{BushawKettle2025}.
The event, however, is different.  Classical random pebbling asks for
solvability at prescribed roots (or at every root), whereas stackability asks
whether the entire surviving support can be collapsed to \emph{some} one
vertex.  This distinction matters even on paths.

Random pebbling on paths already exhibits a stretched-logarithmic scale.
Moews obtained the order
$n2^{\sqrt{\log_2 n}}/\sqrt{\log_2 n}$ for the path solvability threshold
\cite{Moews2019}, following earlier path bounds including
\cite{WiermanEtAl2004,CzygrinowHurlbert2008}.  Bushaw--Kettle subsequently
sharpened the path analysis in the same fixed-total law using partitions into
powers~\cite{BushawKettle2025}.  Our stackability transition has the same
broad scale but a different deterministic mechanism.  At density level, its centre is
\[
 2^{c_n}=\frac{3e}{\sqrt{\log_2 n}}\,2^{\sqrt{\log_2 n}}.
\]
The additive constant $\log_2(3e)$ comes from a sharp local message-excursion
calculation rather than from a monotonicity argument.

There is a second reason to avoid importing the usual threshold formalism
uncritically.  The finite stackability probability need not be monotone in the
total number of pebbles.  For example, on $P_2$ the probabilities at totals
$1,2,3$ are respectively $1,2/3,1$.  We therefore refer to the result as a
\emph{high-density stackability transition}; the theorem is a two-sided
fixed-offset statement and does not require finite-$n$ monotonicity.

The deterministic starting point is an exact tree certificate developed in
the companion TreeStack project~\cite{TreeStack2026}.  For a finite tree $T$,
a configuration $C$, and a proposed root $r$, TreeStack recursively computes
integer branch messages and a rooted score $\score_r(C)$, with
\[
 \StackableAt(T,C,r)\quad\Longleftrightarrow\quad \score_r(C)>0.
 \tag{1.1}\label{eq:treestack-criterion}
\]
The present paper uses this result as an input; it is not a new ProbStack
theorem.  On a path, the recursion becomes one-dimensional and exposes a
low-phase deficit that doubles unless replenished.  Rare multiscale failures of
replenishment produce irreversible fronts.

The proof has four main components.  First, the path recursion is converted
into an exact dyadic budget.  Second, a de Bruijn binary-partition asymptotic
\cite{deBruijn1948} yields the logarithmic probability of a local excursion,
including the linear term that determines the constant $3e$.  Third, a
constant-cost regeneration step makes this local rate uniform over incoming
states and allows repeated spatial trials.  Fourth, an exact deterministic
necessity theorem shows that every nonstackable fixed-total configuration must
contain a directed message at most $-(2\mu-1)$.  The subcritical side is then
proved by abundance of certified fronts, and the supercritical side by a union
cover of all possible deep messages.

The fixed-total law is handled exactly.  Independent geometric occupancies of
mean $\mu$, conditioned on their sum being $n\mu$, are uniform over weak
compositions.  This classical allocation identity is part of the general
conditioned-allocation framework surveyed by Janson~\cite{Janson2012}.  We use
it together with finite likelihood-ratio estimates; in particular, we never
assume independence after conditioning.

A public-record literature audit carried out for the project found no exact
match, among the sources searched, for the combination of the stackability
event, the uniform fixed-total law, paths, and the fixed-offset theorem below.
This is a search-status statement, not a priority claim.  The closest lines of
work include the deterministic stackability problem of Csern\'ak--Soukup
\cite{CsernakSoukup2026}, random path solvability in the same allocation law
\cite{Moews2019,BushawKettle2025}, classical binary partitions
\cite{Mahler1940,deBruijn1948}, and signed resource recurrences in tree cover
pebbling such as Watson's leaf reduction~\cite{Watson2005}.  Related tree
target-pebbling methods appear in~\cite{AdautoEtAl2026}.  Cover pebbling
uses ``stacking'' in a different extremal sense; for example, Sj\"ostrand's
cover-pebbling theorem concerns initially stacked extremal configurations for a
cover demand rather than the support-collapse event studied here
\cite{Sjoestrand2005}.

\section{Model and main result}\label{sec:model}

Let $G=(V,E)$ be a finite simple graph.  A \emph{configuration} is a function
$C:V\to\Z_{\ge0}$.  A legal pebbling move from $u$ to an adjacent vertex $v$
requires $C(u)\ge2$ and replaces $C(u)$ by $C(u)-2$ and $C(v)$ by $C(v)+1$.
The total mass $|C|=\sum_{v\in V}C(v)$ therefore decreases by one at every
move.

\begin{definition}[Stackability]
A configuration is \emph{stacked at $r$} if $C(r)>0$ and $C(v)=0$ for every
$v\ne r$.  It is \emph{stackable at $r$} if a finite sequence of legal moves
reaches a configuration stacked at $r$, and is \emph{stackable} if it is
stackable at at least one vertex.
\end{definition}

Write $P_n$ for the path with vertices $0,1,\dots,n-1$.  For integers $n\ge1$
and $t\ge0$, let
\[
 \mathcal D_{n,t}
 =\left\{c\in\Z_{\ge0}^n:\sum_{i=0}^{n-1}c_i=t\right\}
\]
with the uniform distribution.  Thus
\[
 |\mathcal D_{n,t}|=\binom{n+t-1}{t}.
\]
This is the uniform multiset (Bose--Einstein) model, not the multinomial law
obtained by independently placing labelled pebbles.

The main theorem concerns totals $t_n=n\mu_n$ with integer mean occupancy
$\mu_n$.

\begin{theorem}[Fixed-offset stackability transition]\label{thm:main}
Let $(\mu_n)$ be a sequence of positive integers, let $C_n$ be uniform on
$\mathcal D_{n,n\mu_n}$, and put
\[
 c_n=\sqrt{\log_2 n}-\frac12\log_2\log_2 n+\log_2(3e).
\]
For every fixed $\eps>0$ the following hold as $n\to\infty$:
\begin{align*}
 \log_2\mu_n\le c_n-\eps\ \text{eventually}
 &\quad\Longrightarrow\quad
 \Prob(C_n\text{ is stackable})\longrightarrow0,\\
 \log_2\mu_n\ge c_n+\eps\ \text{eventually}
 &\quad\Longrightarrow\quad
 \Prob(C_n\text{ is stackable})\longrightarrow1.
\end{align*}
No assertion is made when the offset tends to $0$.
\end{theorem}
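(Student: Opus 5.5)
We follow the four-component route announced in the introduction, and we work throughout under the independent geometric model rather than the uniform one. Let $G_0,\dots,G_{n-1}$ be i.i.d.\ geometric variables on $\Z_{\ge0}$ with mean $\mu=\mu_n$. Conditioned on $\sum_i G_i=n\mu$, this vector is uniform on $\mathcal D_{n,n\mu}$. The probability of the conditioning event is only polynomially small, of order $(n\mu)^{-1/2}$ by a local limit theorem.

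This polynomial factor is too costly for a union bound over the whole path, so we do not simply divide by it. Events that depend on a window of $o(n)$ sites are transferred through a finite likelihood-ratio estimate. Conditioning on the total changes the law of a window of length $m=o(n)$ by a factor $1+o(1)$, uniformly over window contents of typical mass. Global events are then assembled from window events.

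The deterministic input is the TreeStack criterion \eqref{eq:treestack-criterion}. On $P_n$ we root at $r$ and compute the left message $L_r$ and right message $R_r$ by the one-dimensional integer recurrence. For a single branch this takes the form $m_{i+1}=\phi(m_i,c_i)$. A nonnegative message is halved, roughly $\lfloor(m+c)/2\rfloor$. A negative message is a deficit that doubles unless the occupancy $c_i$ replenishes it. The score is then an explicit function of $L_r$, $R_r$ and $c_r$.

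The first step is to make this dyadic budget exact. A deficit that starts at $0$ and persists for $k$ steps requires the occupancies along the window to satisfy a binary-partition constraint. Concretely, the weighted sum $\sum_j 2^{j}c_{i+j}$ must stay below a threshold of order $\mu 2^k$ at every scale. We express the probability $q_\mu$ that a local excursion reaches depth $-(2\mu-1)$ as a sum over such constrained vectors. For geometric occupancies this sum is a generating-function coefficient of binary-partition type. De Bruijn's asymptotic for binary partitions then yields
\[
 -\log_2 q_\mu=x^2+2x\log_2 x-2\log_2(3e)\,x+o(x),\qquad x=\log_2\mu.
\]
Solving $q_\mu n\approx 1$ shows that this is exactly the relation $x=c_n\pm\eps$. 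Under the offset $\eps$, the expected number of excursions is $n q_\mu=2^{\pm(2\sqrt{\log_2 n})\eps(1+o(1))}$, which tends to $\infty$ or to $0$ respectively. The factor $2x\approx2\sqrt{\log_2 n}$ is the derivative of the exponent.

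The \textbf{supercritical side} ($\log_2\mu\ge c_n+\eps$) uses the deterministic necessity theorem stated in the introduction. Every nonstackable configuration contains a directed message at most $-(2\mu-1)$. We take a union bound over the $2n$ possible starting sites and the two directions. A deep message from a given site requires a local deficit excursion, and this event is confined to a window of length $O(x)$, since the deficit doubles at each step. Each such event therefore has probability $q_\mu(1+o(1))$ in the conditioned law, by the window likelihood ratio. We need the estimate uniformly over the message entering the window, so we must show that any incoming nonnegative message only helps. This monotonicity of $\phi$ in $m$ should follow directly from the recurrence. The total is then $O(nq_\mu)\to0$.

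The \textbf{subcritical side} is where we expect the main obstacle. Here a single excursion should create an irreversible front. Once a deficit of depth about $2\mu$ forms, the downstream average $\mu$ can never repay it. Consequently no root on the far side can succeed. To kill every root we need fronts in both directions, one pointing left near the right end and one pointing right near the left end, so that each root is blocked from one side. We must also rule out every remaining root in between.

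The difficulty is that the probability of an excursion depends on the incoming message. To handle this we insert a constant-cost regeneration step: a bounded pattern of occupancies, of probability bounded below, resets any incoming message to a fixed state. We then partition the path into $n/(x^2\log n)$ disjoint blocks and treat each block as an independent trial, a regeneration followed by an excursion. The success probability of each trial is $\gtrsim q_\mu$, so at least one success occurs with probability $1-\exp(-\Omega(nq_\mu/\mathrm{poly}(x)))\to1$. The subexponential polynomial losses are absorbed because $nq_\mu$ grows like $2^{\Omega(\eps\sqrt{\log n})}$.

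Finally we pass to the fixed-total law. We use only the first and last $n/3$ sites, and we condition on their sums being typical. The likelihood ratio for events in these two disjoint macroscopic windows is bounded by $O(\sqrt{n\mu})$. This factor is polynomial and is not dominated by $\exp(-2^{\Omega(\sqrt{\log n})})$. We therefore compare failure probabilities: the failure probability in the independent model is at most $\exp(-2^{\Omega(\sqrt{\log n})})$, and multiplying by $\mathrm{poly}(n)$ still gives $o(1)$.
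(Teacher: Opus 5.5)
Your overall architecture matches the paper's: the product law with conditioning, Corollary~\ref{cor:deep-fixed} plus a union bound above the centre, and regeneration, repeated trials and two opposing fronts below it. However, there are two genuine gaps, and the supercritical one is fatal as written.

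\textbf{Supercritical side.} The rate $R(x)$ is the sum of \emph{two} dyadic-simplex costs, each only $\tfrac12x^2+x\log_2x+O(x)$.
\begin{enumerate}[label=(\roman*)]
\item The message typically sits at level $\asymp\mu$. It must first descend through about $L=\lceil\log_2\mu\rceil$ positive-phase updates to a terminal value $a\in\{0,1\}$. Reading $F(y)\ge (y-3)/2$ backwards forces those occupancies into a simplex of budget about $\frac{a+3}{4}2^L$.
\item Only then can a low-phase run from $a$ build the deficit to order $\mu$, with budget about $\frac{3-a}{8}2^L$.
\end{enumerate}
Your description of $q_\mu$ (a deficit started at $0$ under one binary-partition constraint) is only the second simplex. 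Your uniformity device lands exactly there. The worst nonnegative incoming message is $0$, from which reaching $-(2\mu-1)$ costs only the half rate. The union bound then gives $n\,2^{-x^2/2-x\log_2x+O(x)}=2^{(\frac12+o(1))\log_2 n}\to\infty$ at $x=c_n+\eps$. Separately, $\phi(m,c)=F(m+c)$ is not monotone in $m$: $F(2)=1>F(3)=0$ and $F(4)=2>F(5)=1$.

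What is needed is a union bound over the location of the \emph{first} deep message. Both blocks must be charged as necessary constraints on the occupancies alone; the backward positive-phase constraint does not depend on the incoming state. Each interior location then costs $2^{-R(x)+o(x)}$. The boundary case, where the scan starts from $\EMPTY$ and there is no positive-entry block, pays only the half rate but carries no factor $n$.

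The constant $\log_2(3e)$ also depends on the terminal classification, which you assert without derivation. The $1\to0$ route has a smaller linear coefficient than $0\to0$. It becomes more expensive only because it pays an extra $L+O(1)$ bits to output exactly $0$, since $F(y)=0$ only at $y=3$.

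\textbf{Subcritical side.} A deficit of depth about $2\mu$ is not an irreversible front. In the low phase the deficit map $Z\mapsto2(Z-X)$ of Lemma~\ref{lem:deficit} has neutral drift at $Z=2\mu$. A single occupancy exceeding $2\mu$ (probability about $e^{-2}$) returns the message to the positive phase, and roots further along are no longer excluded. Proposition~\ref{prop:front} certifies exclusion only when the message plus \emph{all} mass beyond the cut is $\le0$.

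Each trial must therefore end with a runaway buffer of $O(\log n)$ sites, on which every occupancy is at most a fixed fraction of the current deficit. This has conditional probability bounded below and pushes the deficit past $2n\mu+3$. That dominates the remaining mass whenever $T\le2n\mu$, which is automatic under $T=n\mu$.

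Likewise, a bounded pattern cannot reset an arbitrary incoming message, which can be of order $n\mu$. The paper first applies a reset block of length $\lceil\log_2(4n)\rceil$ and treats a reset output $\le-\mu$ as an already-found seed, before the constant-cost step of Lemma~\ref{lem:regeneration}.

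With these repairs, and a separate finite-pattern argument for bounded $\mu_n$ (where the local asymptotic is unavailable), your trial count and your comparison of $\exp(-2^{\Omega(\sqrt{\log n})})$ with a polynomial conditioning cost go through as in the paper. The conditioning mass is of order $1/(\mu\sqrt n)$, not $(n\mu)^{-1/2}$, but that error is harmless.
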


The theorem is best read on the logarithmic density scale.  If
$N=\log_2 n$ and $s=\sqrt N$, then the centre is
$c_n=s-\log_2s+\log_2(3e)$.  The proof will identify a local rare-event rate
$R(x)$ satisfying
\[
 R(x)=x^2+2x\log_2x-2\log_2(3e)x,
 \tag{2.1}\label{eq:R}
\]
and show that a fixed displacement $\delta$ from the centre gives
\[
 N-R(c_n+\delta)=-2\delta\sqrt N+o(\sqrt N).
 \tag{2.2}\label{eq:balance-preview}
\]
Thus a fixed negative offset creates exponentially many effective spatial
opportunities for a bad front, whereas a fixed positive offset makes the
union of all deep-message obstructions vanish.

\begin{remark}[Finite nonmonotonicity]
On $P_2$, the exact stackability probabilities at totals $1,2,3$ are
$1,2/3,1$.  On $P_3$, the probability decreases from $19/21$ at total $5$ to
$25/28$ at total $6$, before becoming $1$ at total $7$.  These finite facts
are why Theorem~\ref{thm:main} is stated directly rather than deduced from a
monotone-family threshold theorem.
\end{remark}

\section{Exact path messages and deterministic fronts}\label{sec:messages}

We now specialize the rooted-score criterion~\eqref{eq:treestack-criterion} to
a path.  The integer transfer function is
\[
 F(y)=
 \begin{cases}
  2y-3,&y\le1,\\
  1,&y=2,\\
  0,&y=3,\\
  y/2,&y\ge4\text{ even},\\
  (y-3)/2,&y\ge5\text{ odd}.
 \end{cases}
 \tag{3.1}\label{eq:F}
\]
Branch messages have one additional categorical state, denoted $\EMPTY$,
meaning that the branch contains no pebbles.  It is important that
$\EMPTY$ is not the integer $0$.  For numerical sums only, write
$[\EMPTY]=0$ and $[m]=m$ for $m\in\Z$.

For $x\in\Z_{\ge0}$ define the path update
\[
 \Phi(\EMPTY,x)=
 \begin{cases}
 \EMPTY,&x=0,\\
 F(x),&x>0,
 \end{cases}
 \qquad
 \Phi(m,x)=F(m+x)\quad(m\in\Z).
 \tag{3.2}\label{eq:path-step}
\]
For a configuration $C=(C_0,\dots,C_{n-1})$, the left-to-right outgoing
messages are
\[
 \ell_0=\Phi(\EMPTY,C_0),\qquad
 \ell_i=\Phi(\ell_{i-1},C_i)\quad(1\le i\le n-2),
 \tag{3.3}\label{eq:left-scan}
\]
and the right-to-left messages $\rho_i$ are defined by the same recurrence on
the reversed path.  The message entering vertex $i$ from the left is
$\ell_{i-1}$ when $i>0$, and the one entering from the right is $\rho_{i+1}$
when $i<n-1$.

\begin{proposition}[Path rooted score]\label{prop:path-score}
For every $0\le i<n$,
\[
 \score_i(C)
 =C_i+\mathbf1_{i>0}[\ell_{i-1}]
      +\mathbf1_{i<n-1}[\rho_{i+1}],
 \tag{3.4}\label{eq:path-score}
\]
and
\[
 \StackableAt(P_n,C,i)\quad\Longleftrightarrow\quad \score_i(C)>0.
\]
Consequently $C$ is stackable if and only if
$\max_i\score_i(C)>0$.
\end{proposition}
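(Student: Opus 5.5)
The plan is to obtain Proposition~\ref{prop:path-score} as a direct specialisation of the TreeStack criterion~\eqref{eq:treestack-criterion}, with the rooted recursion written out on a path. For a tree $T$ rooted at $r$, TreeStack defines each branch message by the same child-to-parent rule: a branch with no pebbles sends $\EMPTY$; otherwise the branch sends $F$ applied to its vertex's own pebbles plus the numerical sum $[\cdot]$ of the messages from its children. The rooted score is then $\score_r(C)=C_r+\sum_{\text{children } u}[m_u]$. I will take this description, with transfer function~\eqref{eq:F}, as the content of the cited result. The first step is to check that this is exactly the form used in~\cite{TreeStack2026}, including the convention that an empty branch contributes $[\EMPTY]=0$ and never passes through $F$.

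Next I root $P_n$ at $i$. The branches at $i$ are the left subpath $0,\dots,i-1$, present only when $i>0$, and the right subpath $i+1,\dots,n-1$, present only when $i<n-1$. In the left branch, each vertex $j<i$ has exactly one child, namely $j-1$, except that vertex $0$ is a leaf. I will prove by induction on $j$ that the TreeStack message sent by $j$ to its parent $j+1$ equals $\ell_j$ from~\eqref{eq:left-scan}.

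For the base case $j=0$, a leaf with $C_0=0$ sends $\EMPTY$, and otherwise it sends $F(C_0)$; this is $\Phi(\EMPTY,C_0)$. For the inductive step, suppose the child sends $\EMPTY$. Then the subtree below $j$ is empty, so $j$ sends $\EMPTY$ if $C_j=0$ and $F(C_j)$ otherwise; this matches $\Phi(\EMPTY,C_j)$. If instead the child sends an integer $m$, the branch is nonempty, and $j$ sends $F(m+C_j)=\Phi(m,C_j)$.

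The recursion only ever needs $j\le i-1\le n-2$, which matches the index range in~\eqref{eq:left-scan}. By symmetry under reversal of the path, the right branch delivers $\rho_{i+1}$ to $i$. Substituting both messages into the rooted-score formula gives~\eqref{eq:path-score}. The indicators record which branches exist, and the brackets implement the convention $[\EMPTY]=0$.

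The equivalence $\StackableAt(P_n,C,i)\iff\score_i(C)>0$ is then~\eqref{eq:treestack-criterion} with $T=P_n$ and $r=i$. The final claim follows because stackability is defined as stackability at some vertex, so it holds if and only if some $\score_i(C)>0$, that is, if and only if $\max_i\score_i(C)>0$.

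The main obstacle is bookkeeping rather than mathematics. I must confirm that the TreeStack recursion treats the categorical state $\EMPTY$ exactly as $\Phi$ does. In particular, a vertex with $C_j=0$ and nonempty subtree must still apply $F$ to $m+0$; it must not collapse to $\EMPTY$. Mishandling this case would change the value of the message, because $F(0)=-3$ whereas $[\EMPTY]=0$. If the cited statement is phrased for general child multisets, I will also check that with a single child the sum reduces to that child's bracketed message.
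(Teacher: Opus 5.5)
Your proposal is correct and takes the same route as the paper: it specialises the TreeStack branch recursion and the rooted-score criterion~\eqref{eq:treestack-criterion} to the left and right branches of $P_n$ at $i$. Your explicit induction identifying the branch messages with the scans $\ell_j$ and $\rho_j$, including the care that $\EMPTY$ is distinct from the integer $0$, spells out what the paper's two-line proof leaves implicit.
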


\begin{proof}
The path branches incident to $i$ are exactly its nonempty left and right
components.  Applying the TreeStack branch recursion and rooted-score theorem
to those two branches gives~\eqref{eq:path-score}; an empty branch contributes
zero to the score but remains categorically distinct from an integer-zero
message.  The equivalence is then exactly~\eqref{eq:treestack-criterion}.
\end{proof}

Once a scan has become active, it is the scalar recurrence
$M_j=F(M_{j-1}+C_j)$.  Its rare failures are controlled by the low branch of
$F$.

\begin{lemma}[Dyadic deficit identity]\label{lem:deficit}
Suppose $M$ is an active integer message and the next occupancy is $c$, with
$c+M\le1$.  Put $Z=3-M$.  Then after the update
\[
 Z'=2(Z-c).
 \tag{3.5}\label{eq:deficit-step}
\]
If a block $c_1,\dots,c_k$ remains entirely in this low phase, then
\[
 Z_k=2^kZ_0-\sum_{j=1}^k2^{k-j+1}c_j.
 \tag{3.6}\label{eq:deficit-block}
\]
\end{lemma}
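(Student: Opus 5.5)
The proof is a direct computation from the definition of the path update~\eqref{eq:path-step} and the transfer function~\eqref{eq:F}; no probabilistic input is needed. For an active integer message $M$ and occupancy $c$, the update is $M'=\Phi(M,c)=F(M+c)$. The hypothesis $c+M\le1$ places the argument of $F$ in the low branch, so $M'=2(M+c)-3$.

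To obtain~\eqref{eq:deficit-step}, I substitute $M=3-Z$:
\[
 M'=2(3-Z+c)-3=3-2Z+2c.
\]
Then
\[
 Z'=3-M'=2Z-2c=2(Z-c),
\]
which is~\eqref{eq:deficit-step}. The one point to check is that ``active'' means $M\in\Z$ rather than $\EMPTY$. This guarantees that $\Phi(M,c)=F(M+c)$, and the branch $\Phi(\EMPTY,x)$ is never used.

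For~\eqref{eq:deficit-block}, I argue by induction on $k$, reading ``remains entirely in the low phase'' as $M_{j-1}+c_j\le1$ for each $j=1,\dots,k$. The messages $M_j$ are produced by the scalar recurrence $M_j=F(M_{j-1}+c_j)$, and each $M_j$ is again an integer, so the scan stays active. Hence~\eqref{eq:deficit-step} applies at every step and gives $Z_j=2Z_{j-1}-2c_j$. The case $k=0$ is trivial. For the inductive step, assume~\eqref{eq:deficit-block} holds for $k-1$. Then
\[
 Z_k=2Z_{k-1}-2c_k
 =2^kZ_0-\sum_{j=1}^{k-1}2^{k-j+1}c_j-2c_k,
\]
and the final term is exactly the $j=k$ summand $2^{k-k+1}c_k$.

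There is no genuine obstacle here. The only care needed is the bookkeeping of the exponent $2^{k-j+1}$: the factor $2$ multiplying $c_j$ at step $j$ is subsequently doubled $k-j$ more times. One must also keep the $\EMPTY$ state excluded so that the low branch of $F$ really governs every step.
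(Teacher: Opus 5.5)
Your proposal is correct and follows the paper's proof: you apply the low branch $F(y)=2y-3$ to get $3-M'=2(3-M-c)$, then iterate (your induction just makes the iteration explicit). Your remarks that the message must be an integer rather than $\EMPTY$ and that the low-phase condition must hold at every step are appropriate bookkeeping.
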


\begin{proof}
In the low phase, $M'=2(c+M)-3$ by~\eqref{eq:F}, so
$3-M'=2(3-M-c)$.  Iterating gives~\eqref{eq:deficit-block}.
\end{proof}

The next deterministic observation turns a sufficiently negative message into
a spatial exclusion certificate.

\begin{lemma}[Message bounded by branch mass]\label{lem:message-mass}
If a nonempty path branch has total mass $A$ and integer outgoing message $m$,
then $m\le A$.
\end{lemma}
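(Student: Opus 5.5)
Induct along the scan. A nonempty path branch ending at the vertex adjacent to the receiver is described, in left-to-right form, by the messages $\ell_0,\dots,\ell_k$ of~\eqref{eq:left-scan}. The outgoing message is $\ell_k$ and the branch mass is $A=C_0+\dots+C_k$. I will prove, by induction on $j$, that whenever $\ell_j$ is an integer (not $\EMPTY$),
\[
 \ell_j\le A_j:=\sum_{i=0}^{j}C_i .
\]
The case $j=k$ is the lemma. The right-to-left messages $\rho_i$ are handled identically on the reversed path.

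The single ingredient is an elementary property of the transfer function~\eqref{eq:F}: for every integer $y$ we have
\[
 F(y)\le \max(y,0)\quad\text{and}\quad F(y)\le y \text{ whenever } y\ge1 .
\]
This is checked branch by branch:
\begin{itemize}
 \item[] for $y\le1$, $F(y)=2y-3\le y-2<y$, and also $2y-3<0$;
 \item[] $F(2)=1\le2$ and $F(3)=0\le3$;
 \item[] $y/2\le y$ for even $y\ge4$;
 \item[] $(y-3)/2\le y$ for odd $y\ge5$.
\end{itemize}
So in fact $F(y)\le y$ for all integers $y$, and this stronger form is the one I use.

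\emph{Base and $\EMPTY$ transitions.} Suppose $\ell_{j-1}=\EMPTY$ (or $j=0$) and $\ell_j$ is an integer. By~\eqref{eq:path-step} this forces $C_j>0$ and $\ell_j=F(C_j)$. Since $F(y)\le y$, we get $\ell_j\le C_j\le A_j$, because all earlier occupancies are nonnegative.

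\emph{Integer-to-integer transitions.} Suppose $\ell_{j-1}=m\in\Z$. Then $\ell_j=F(m+C_j)\le m+C_j$, and the induction hypothesis $m\le A_{j-1}$ gives $\ell_j\le A_{j-1}+C_j=A_j$.

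Nothing here is a real obstacle. The only point needing care is the categorical state $\EMPTY$. Once a message becomes an integer it never returns to $\EMPTY$, since $\Phi(m,x)$ is always an integer. So the integer messages form a terminal segment of the scan, and the induction only needs to start at the first integer message. Nonemptiness of the branch guarantees that the final message $\ell_k$ is an integer, so the bound applies at $j=k$. The inequality $F(y)\le y$ also covers negative $y$ in the low branch, so deeply negative incoming messages cause no issue.
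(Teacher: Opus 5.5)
Your proposal is correct and follows essentially the same route as the paper: check $F(y)\le y$ case by case, then induct along the scan so that each new message is at most the previous message plus the new occupancy. Your explicit treatment of the $\EMPTY$-to-integer activation step is just a more careful version of the paper's ``one-vertex case is immediate.''
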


\begin{proof}
The transfer satisfies $F(y)\le y$ for every integer $y$.  The one-vertex case
is immediate.  Inductively, if the child message is at most the child mass,
then the next message is at most the occupancy at the new vertex plus that
mass.
\end{proof}

\begin{proposition}[Irreversible exclusion front]\label{prop:front}
Consider a left-to-right active message $m_i$ sent across the cut after vertex
$i$, and let
$A_i=\sum_{j=i+1}^{n-1}C_j$ be all mass to its right.  If
\[
 m_i+A_i\le0,
 \tag{3.7}\label{eq:front-condition}
\]
then no vertex strictly to the right of the cut can be a stacking root.  The
symmetric statement holds from the right.  In particular, if a left exclusion
region and a right exclusion region cover the whole path, then $C$ is
nonstackable.
\end{proposition}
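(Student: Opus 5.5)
To prove the front proposition, I will work directly with the rooted scores of Proposition~\ref{prop:path-score}. Fix a vertex $k>i$ strictly to the right of the cut; the goal is $\score_k(C)\le 0$. Then $k$ cannot be a stacking root. By \eqref{eq:path-score},
\[
 \score_k(C)=C_k+[\ell_{k-1}]+\mathbf 1_{k<n-1}[\rho_{k+1}],
\]
so the task is to bound the incoming left message $\ell_{k-1}$ by the left message $m_i=\ell_i$ at the cut together with the mass lying between the cut and $k$. Then I will bound $\rho_{k+1}$ by the mass to the right of $k$.

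The first step is a monotone one-sided propagation bound for the active left scan. Since $m_i$ is an integer (active), every later $\ell_j$ is an integer, given by $\ell_j=F(\ell_{j-1}+C_j)$. The key inequality I will use is $F(y)\le y$ for every integer $y$, the fact already invoked in Lemma~\ref{lem:message-mass}. I will check it case by case from \eqref{eq:F}: $2y-3\le y$ for $y\le1$, $1\le2$, $0\le3$, $y/2\le y$ and $(y-3)/2\le y$ for $y\ge4$. Iterating from $j=i+1$ to $k-1$ then gives
\[
 [\ell_{k-1}]\le m_i+\sum_{j=i+1}^{k-1}C_j .
\]
When $k=i+1$ the sum is empty and $\ell_{k-1}=m_i$, so the bound holds trivially.

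Second, I bound the right contribution. If $k=n-1$ it is absent. Otherwise the right branch at $k$ is either empty, in which case $[\rho_{k+1}]=0$, or nonempty, in which case Lemma~\ref{lem:message-mass} gives $[\rho_{k+1}]\le\sum_{j=k+1}^{n-1}C_j$. In all cases the right contribution is at most the mass to the right of $k$.

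Summing the three pieces yields $\score_k(C)\le m_i+A_i\le0$ by \eqref{eq:front-condition}, so $k$ is not a stacking root. The symmetric statement follows by applying the same argument to the reversed path. For the covering statement, every vertex is then excluded by one of the two fronts, so $\max_i\score_i(C)\le0$ and $C$ is nonstackable by Proposition~\ref{prop:path-score}. The only delicate point is the categorical distinction between $\EMPTY$ and the integer $0$: I must make sure that activity of the scan at the cut means all subsequent left messages are integers. This follows because $\Phi(m,x)=F(m+x)$ for integer $m$, so the scan never returns to $\EMPTY$. I expect no real obstacle beyond verifying $F(y)\le y$ on the negative branch and handling the boundary cases $k=i+1$ and $k=n-1$.
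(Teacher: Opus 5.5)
Your proof is correct, but it takes a somewhat different route from the paper's. The paper propagates the front condition itself, one vertex at a time. Since $C_{i+1}\le A_i$, the next input satisfies $m_i+C_{i+1}\le 0$, so the update lies in the low phase. The explicit formula $m'=2(m_i+c)-3$ then gives $m'+A'=(m_i+A_i)+(m_i+c)-3<0$. Hence $\ell_j+A_j\le 0$ for every $j\ge i$, strictly after the first step. At a root $k$ the paper combines $\ell_{k-1}+A_{k-1}\le 0$ with Lemma~\ref{lem:message-mass} on the right branch.

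You instead use only the universal inequality $F(y)\le y$ on both sides. This amounts to observing that $\ell_j+A_j$ is nonincreasing along any active scan, whatever the phase. Your argument is thus the induction of Lemma~\ref{lem:message-mass} started from the cut message rather than from $\EMPTY$, and it never needs the low branch of $F$. It is shorter and more uniform.

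What the paper's version buys is extra dynamical information that the stated conclusion does not require. Once the front condition holds, the scan stays in the low phase at every later step, so the deficit recursion of Lemma~\ref{lem:deficit} applies throughout. The front quantity also drops by at least $3$ per vertex. This is what justifies the word ``irreversible'' and matches the deficit picture behind the runaway buffer in Section~\ref{sec:global}.
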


\begin{proof}
At the next vertex, $c\le A_i$ gives $m_i+c\le0$, so the update lies in the low
phase.  With $m'=2(m_i+c)-3$ and $A'=A_i-c$,
\[
 m'+A'=(m_i+A_i)+(m_i+c)-3<0.
\]
Thus the inequality propagates strictly along the path.  At a proposed root
to the right, the message from the opposite branch is at most the mass of that
branch by Lemma~\ref{lem:message-mass}; hence its rooted score is nonpositive.
The right-to-left claim is identical.  If the two exclusion regions cover all
vertices, Proposition~\ref{prop:path-score} rules out every root.
\end{proof}

The converse is false: a nonstackable configuration may leave a short central
gap between the two exclusion regions.  The upper side of the main theorem
therefore needs a genuinely necessary condition, not only the sufficient front
certificate.

\section{Deep messages are necessary for nonstackability}\label{sec:necessity}

This section gives that condition.  Its exact finite form is useful both
mathematically and for formal verification.

For an integer $h\ge1$, say that $C$ has an \emph{$h$-deep directed message}
if some message across an oriented edge of the path is an integer at most
$-h$.  The categorical value $\EMPTY$ is not a witness.

\begin{theorem}[Deep-message necessity]\label{thm:deep-general}
Let $n\ge2$, let $C$ be a positive-mass nonstackable configuration on $P_n$,
and let $h\ge2$ be an integer.  If
\[
 |C|>(n-1)\bigl(\lfloor h/2\rfloor+1\bigr),
 \tag{4.1}\label{eq:deep-threshold}
\]
then $C$ has an $h$-deep directed message.
\end{theorem}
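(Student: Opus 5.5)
We argue by contraposition. Suppose $C$ is nonstackable with positive mass, and suppose every integer directed message is at least $-h+1$; we aim to show $|C|\le (n-1)(\lfloor h/2\rfloor+1)$. The only tool we need is Proposition~\ref{prop:path-score}. Nonstackability says that for every vertex $i$,
\[
 C_i+\mathbf1_{i>0}[\ell_{i-1}]+\mathbf1_{i<n-1}[\rho_{i+1}]\le 0 .
\]
Together with the standing lower bound $[\,\cdot\,]\ge -h+1$ on integer messages (and $[\EMPTY]=0$), this already gives $C_i\le 2(h-1)$ for interior vertices and $C_i\le h-1$ at the ends. That is far too weak, since it is linear in $h$ rather than in $h/2$. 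The missing factor must come from the halving in the transfer~\eqref{eq:F}.

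The plan is to charge the mass to the $n-1$ edges, giving each edge $\{i,i+1\}$ a budget of $\lfloor h/2\rfloor+1$. The charged quantity will be a combination of the two messages crossing that edge, $\ell_i$ and $\rho_{i+1}$.

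\textbf{Step 1: upper bound on messages.} The score inequality at $i$ gives
\[
 y_i:=[\ell_{i-1}]+C_i\le -[\rho_{i+1}]\le h-1 .
\]
Applying $F$ to $y_i\le h-1$ bounds the outgoing message by $\ell_i\le\max(1,\lfloor (h-1)/2\rfloor)$. This uses the explicit branches of $F$: the high branches give roughly $y/2$, and the middle values $F(2)=1$, $F(3)=0$ are small. Symmetrically $\rho_i\le\max(1,\lfloor(h-1)/2\rfloor)$. So every message lies in the window $[-h+1,\lfloor h/2\rfloor]$, up to small constants; I would check the parity and the $+1$ carefully here.

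\textbf{Step 2: lower bound on messages.} By hypothesis no message falls below $-h+1$. Through the low branch $M'=2(M+c)-3$ of Lemma~\ref{lem:deficit}, this forces $y_i\ge 2-h/2$ whenever the low branch is used. In every branch we then have a lower estimate
\[
 \ell_i\ \ge\ \tfrac12 y_i-\tfrac32
\]
up to a bounded error, in the appropriate range. This is the step where the depth hypothesis is actually used.

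\textbf{Step 3: telescoping.} Combine the score inequality at $i+1$,
\[
 C_{i+1}\le -\ell_i-[\rho_{i+2}],
\]
with the recursion $\ell_{i+1}=F(\ell_i+C_{i+1})$, and sum over $i$. The plan is to show that the per-edge quantity
\[
 C_{i+1}+\ell_i-\ell_{i+1}\quad\text{(suitably symmetrised with the $\rho$'s)}
\]
is at most $\lfloor h/2\rfloor+1$. The $\ell$ and $\rho$ terms then telescope, and the boundary terms are controlled by the endpoint score inequalities. The $\EMPTY$ prefix is where $\ell$ is still $\EMPTY$, i.e.\ vertices with $C_j=0$ before the first pebble; these vertices carry no mass, so they cost nothing. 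Positive mass together with $n\ge2$ guarantees that some scan becomes active, so at least one integer message exists.

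\textbf{Main obstacle.} The hard part is Step~3: finding the exact edge potential so that each edge absorbs at most $\lfloor h/2\rfloor+1$. My first attempt would be potential $\ell_i+\rho_{i+1}$ on edge $\{i,i+1\}$, using
\[
 y\le 2F(y)+3 \quad\text{on high branches}, \qquad y=\bigl(F(y)+3\bigr)/2 \quad\text{on the low branch}.
\]
The idea is to bound $C_{i+1}$ by the halving loss, with a case split on which branch of $F$ each update at vertex $i+1$ uses. If a clean telescoping fails, the fallback is a greedy argument. Take the vertex $i^\*$ maximising the score; nonstackability makes its score nonpositive. Then bound the left mass $\sum_{j\le i^\*}C_j$ by $i^\*(\lfloor h/2\rfloor+1)$ plus $\ell_{i^\*}$, by induction along the left scan, and symmetrically for the right mass, letting the two message terms cancel against the score at $i^\*$.
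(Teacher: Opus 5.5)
Your Step~3 is the paper's argument. You did not need to hedge it: $C_{i+1}+\ell_i-\ell_{i+1}=y_{i+1}-F(y_{i+1})$ is exactly the increment of the prefix dissipation $D_k=A_k-[\ell_k]$, which is zero while the scan is still $\EMPTY$. Summing these increments over vertices $0,\dots,n-2$ gives $D_{n-2}$. The score inequality at the last vertex, $C_{n-1}+[\ell_{n-2}]\le0$, then gives $|C|\le D_{n-2}$. No symmetrisation with the $\rho$'s is needed, nor the potential $\ell_i+\rho_{i+1}$, nor a maximising vertex. The right-to-left messages enter only through $y_k\le h-1$, which is the score at $k$ combined with $[\rho_{k+1}]\ge -h+1$. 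Your fallback reduces to the same telescoping once the local bound below is in hand. The genuine gap is that the one inequality carrying the whole proof,
\[
 y-F(y)\le\lfloor h/2\rfloor+1\quad\text{whenever } y\le h-1\text{ and } F(y)\ge -h+1,
\]
is left as your ``main obstacle'' and never verified.

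It follows in two lines from facts you already wrote, but not from Step~2 as you stated it.
\begin{itemize}
 \item On the high branches, $F(y)\ge (y-3)/2$, so $y-F(y)\le (y+3)/2\le (h+2)/2$.
 \item On the low branch, $y-F(y)=3-y$, and $2y-3\ge -h+1$ gives $3-y\le 1+h/2$.
\end{itemize}
Integrality turns both bounds into $\lfloor h/2\rfloor+1$. The first activated step has $y=C_k\le h-1$ and is covered too.

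Your uniform estimate $\ell_i\ge\tfrac12 y_i-\tfrac32$ ``up to a bounded error'' is false on the low branch. For $y\le 0$ one has $2y-3<\tfrac12 y-\tfrac32$, with discrepancy $\tfrac32(h/2-1)$ at $y=2-h/2$. That point is exactly where the bound is attained for even $h$: there $y-F(y)=h/2+1$. So the low branch cannot be absorbed into a halving estimate. It is precisely where the depth hypothesis $F(y)\ge -h+1$ must be used, while the high branches need only $y\le h-1$.
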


The fixed-total consequence is the form used in the random theorem.

\begin{corollary}[Exact fixed-total target]\label{cor:deep-fixed}
Let $n\ge2$, $\mu\ge1$, and $|C|=n\mu$.  If $C$ is nonstackable, then some
directed path message is an integer $m$ satisfying
\[
 m\le-(2\mu-1).
 \tag{4.2}\label{eq:deep-fixed}
\]
\end{corollary}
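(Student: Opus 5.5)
We derive Corollary~\ref{cor:deep-fixed} directly from Theorem~\ref{thm:deep-general} by choosing the largest admissible depth. Set $h=2\mu-1$. Two things must be checked: that $h$ is an admissible depth, and that the fixed total beats the threshold~\eqref{eq:deep-threshold} for this $h$.

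For $\mu\ge2$ we have $h=2\mu-1\ge3\ge2$, so $h$ is admissible. Since $h$ is odd, $\lfloor h/2\rfloor=\mu-1$, and the threshold becomes $(n-1)\mu$. The hypothesis $|C|=n\mu$ gives
\[
 |C|=n\mu>(n-1)\mu ,
\]
because $\mu\ge1$. Also $|C|>0$, so $C$ is a positive-mass nonstackable configuration with $n\ge2$. Theorem~\ref{thm:deep-general} therefore supplies a directed message that is an integer $m\le-h=-(2\mu-1)$, which is~\eqref{eq:deep-fixed}. Taking $h$ odd is the right choice: an even $h=2\mu$ would push the threshold up to $(n-1)(\mu+1)$, which $n\mu$ need not exceed.

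The only step needing separate care is $\mu=1$. Then $h=1$ falls outside the range $h\ge2$ of the theorem, and the target~\eqref{eq:deep-fixed} is only $m\le-1$. Here I would instead apply Theorem~\ref{thm:deep-general} with $h=2$. The threshold is then $(n-1)(1+1)=2n-2$, and $|C|=n$ exceeds it only when $n<2$, so this route fails for every $n\ge2$.

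I would therefore handle $\mu=1$ directly, as follows.
\begin{itemize}[nosep]
 \item Suppose every directed message is either $\EMPTY$ or a nonnegative integer.
 \item Take any vertex $i$ with $C_i>0$; one exists since $|C|=n>0$.
 \item Each bracketed term in~\eqref{eq:path-score} is then $\ge0$, so $\score_i(C)\ge C_i>0$.
 \item By Proposition~\ref{prop:path-score}, $C$ is stackable, a contradiction.
\end{itemize}
Hence some directed message is a negative integer, i.e.\ $m\le-1=-(2\mu-1)$. This argument does not use the total at all, so it covers $\mu=1$ for every $n$.

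The main obstacle is therefore only the boundary case $\mu=1$, which the score positivity argument above resolves. The rest is the arithmetic $\lfloor(2\mu-1)/2\rfloor=\mu-1$ together with a direct invocation of the theorem.
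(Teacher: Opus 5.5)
Your proof is correct, and for $\mu\ge2$ it coincides with the paper's. Both set $h=2\mu-1$, use $\lfloor h/2\rfloor+1=\mu$ and $n\mu>(n-1)\mu$, and invoke Theorem~\ref{thm:deep-general}.

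The routes differ only at $\mu=1$. The paper does not apply the theorem there, since its hypothesis $h\ge2$ excludes $h=1$. Instead it reopens the prefix-dissipation proof and notes that the local bound \eqref{eq:dissipation-local} survives at $h=1$. It does so vacuously, because $y\le0$ forces $F(y)=2y-3\le-3$.

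You bypass that proof and argue directly from Proposition~\ref{prop:path-score}. If no directed message is a negative integer, every bracket in \eqref{eq:path-score} is nonnegative. Then any vertex with $C_i>0$ has $\score_i(C)\ge C_i>0$, contradicting nonstackability.

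Your version is more elementary and self-contained. It relies only on stated results, not on extending a proof beyond its stated range. It also shows slightly more: every positive-mass nonstackable configuration, of any total, carries a negative integer message.

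Unwound, the paper's $h=1$ dissipation argument reduces to the same observation, so nothing is lost either way. The paper's phrasing just keeps a single mechanism for all $\mu$.
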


\begin{proof}[Proof of Theorem~\ref{thm:deep-general}]
Assume for contradiction that every directed integer message is strictly
larger than $-h$.  Scan from the left and let $A_k$ be the mass in the prefix
through vertex $k$, while $m_k$ is the numerical contribution of the outgoing
prefix message.  Define its \emph{dissipation}
\[
 D_k=A_k-m_k.
\]
Nonstackability and Proposition~\ref{prop:path-score} imply that the rooted
score at every vertex is nonpositive.  At an interior step, the message from
the unscanned side is $>-h$, so the effective input $y$ on the scanned side
must satisfy $y\le h-1$.  The outgoing message is also $>-h$.

A direct check of the five cases in~\eqref{eq:F} gives the local bound
\[
 y-F(y)\le \lfloor h/2\rfloor+1
 \quad\text{whenever}\quad
 y\le h-1\ \text{ and }\ F(y)>-h.
 \tag{4.3}\label{eq:dissipation-local}
\]
The same inequality holds for the first activated step, with the empty state
handled separately.  Hence every new vertex increases prefix dissipation by
at most $\lfloor h/2\rfloor+1$, and after the first $n-1$ vertices
\[
 D_{n-2}\le(n-1)(\lfloor h/2\rfloor+1).
 \tag{4.4}\label{eq:dissipation-global}
\]
At the last vertex, nonpositivity of its rooted score forces the total mass to
be at most this prefix dissipation.  This contradicts~\eqref{eq:deep-threshold}.
\end{proof}

\begin{proof}[Proof of Corollary~\ref{cor:deep-fixed}]
Take $h=2\mu-1$.  Then
$\lfloor h/2\rfloor+1=\mu$ and
$(n-1)\mu<n\mu=|C|$.  For $\mu\ge2$ this is directly
Theorem~\ref{thm:deep-general}.  The edge case $\mu=1$ uses the same
prefix-dissipation proof, for which the local estimate
\eqref{eq:dissipation-local} remains valid at $h=1$; this yields the exact
target $-1$ rather than weakening the statement.
\end{proof}

The proof is deliberately finite: there is no asymptotic approximation in
Corollary~\ref{cor:deep-fixed}.  This exact statement is also the part of the
probabilistic proof that is fully formalised in Lean; see
Section~\ref{sec:formal}.

\section{The product model and local excursions}\label{sec:local}

The weak-composition law has a particularly useful product representation.
Let $X_1,\dots,X_n$ be independent geometric random variables on
$\Z_{\ge0}$ with mean $\mu$:
\[
 p=\frac1{1+\mu},\qquad r=\frac{\mu}{1+\mu},\qquad
 \Prob(X_i=a)=pr^a.
 \tag{5.1}\label{eq:geometric}
\]
For every vector $a=(a_1,\dots,a_n)$ of total $t$,
\[
 \Prob_{\prodprob}(X=a)=p^nr^t.
\]
Thus conditional on $T=\sum_iX_i=t$, all weak compositions of $t$ are equally
likely.  In particular, at $t=n\mu$,
\[
 \Prob_{\wc}(A)=\Prob_{\prodprob}(A\mid T=n\mu)
 \tag{5.2}\label{eq:conditioning-identity}
\]
for every event $A$.  This exact identity is classical in random-allocation
language~\cite{Janson2012}.

\subsection{Dyadic simplex counts}

The deficit recurrence~\eqref{eq:deficit-block} naturally produces weighted
simplices.  For integers $k,B\ge0$, define
\[
 \mathcal E_{k,B}
 =\left\{x\in\Z_{\ge0}^k:
          \sum_{j=0}^{k-1}2^jx_j\le B\right\},
 \qquad N_{k,B}=|\mathcal E_{k,B}|.
 \tag{5.3}\label{eq:dyadic-simplex}
\]
The underlying count is a cumulative binary-partition count.  The asymptotic
input is a specialization of de Bruijn's solution of Mahler's partition
problem~\cite{Mahler1940,deBruijn1948}.

\begin{lemma}[Uniform dyadic-simplex asymptotic]\label{lem:dyadic-asymptotic}
Fix an integer $a$, a compact interval $J\subset(0,\infty)$, and $C<\infty$.
Uniformly for $\lambda\in J$, integers $B_L=\lambda2^L+\delta_L$ with
$|\delta_L|\le C$, and $k=L+a$,
\[
 \log_2N_{k,B_L}
 =\frac12L^2-L\log_2L
  +\left(\log_2\lambda+\frac12+\log_2e\right)L
  +O((\log L)^2).
 \tag{5.4}\label{eq:dyadic-asymptotic}
\]
The fixed support shift $a$ does not affect the coefficient of $L$.
\end{lemma}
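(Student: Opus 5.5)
Write $p_L(m)$ for the number of binary partitions of $m$, meaning partitions of $m$ into powers of $2$. De Bruijn's theorem~\cite{deBruijn1948} gives the Mahler asymptotic
\[
 \log p_L(m)=\frac{(\log m)^2}{2\log 2}-\log m\,\log\log m+O(\log m)
\]
as $m\to\infty$, together with its refined second-order form. The plan is to reduce $N_{k,B}$ to cumulative binary-partition counts restricted to parts $2^0,\dots,2^{k-1}$, and then read off the exponent.

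\textbf{Step 1: exact reduction.} Adding a slack variable shows that $N_{k,B}$ equals the number of binary partitions of $B$ with largest part at most $2^{k-1}$, with one extra unit-weight variable. Equivalently,
\[
 N_{k,B}=\sum_{m\le B}p_{k}(m),
\]
where $p_k(m)$ counts representations of $m$ as $\sum_{j<k}2^jx_j$. Summing over the coarsest coordinate gives the recursion
\[
 N_{k,B}=\sum_{x_{k-1}\le B/2^{k-1}}N_{k-1,B-2^{k-1}x_{k-1}}.
\]
With $B\approx\lambda 2^L$ and $k=L+a$, the top coordinates take only $O(1)$ or $O(2^{-a})$-bounded ranges. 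So changing $a$ by one multiplies $N$ by a factor between $1$ and $O(\lambda 2^{-a}+1)$, and in all cases by $2^{O(L)}$ at worst. I would refine this to show the factor is bounded by a polynomial in $L$, so that it is absorbed into $O((\log L)^2)$. That is why the shift $a$ does not affect the coefficient of $L$.

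\textbf{Step 2: compare with unrestricted partitions.} For $k=L+a$ with $a\ge 1$, parts larger than $2^{L+a-1}>B$ do not occur. Hence $N_{k,B}=\sum_{m\le B}p(m)$ exactly, where $p$ is the unrestricted binary partition function. Since $p$ is nondecreasing and $\log_2 p$ is smooth on the scale $2^L$, we have $\sum_{m\le B}p(m)=B\,p(B)\,2^{O(1)}$; more precisely, the sum is $p(B)$ times a factor $\Theta(B/\text{local growth})$, and that factor contributes $L+O(\log L)$ to $\log_2$. For $a\le0$, Step~1 reduces to this case at polynomial cost in $L$.

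\textbf{Step 3: expand de Bruijn.} I would insert $\log_2 B=L+\log_2\lambda+O(2^{-L})$ into de Bruijn's formula written in base $2$,
\[
 \log_2 p(m)=\frac{(\log_2 m)^2}{2}-\log_2 m\,\log_2\log_2 m+\Bigl(\frac12+\log_2e-\log_2\log 2\ \text{-type constant}\Bigr)\log_2 m+O((\log\log m)^2),
\]
using the exact constant in de Bruijn's second-order term. Expanding $(L+\log_2\lambda)^2/2$ produces the term $L\log_2\lambda$. The term $(L+\cdot)\log_2(L+\cdot)$ gives $L\log_2 L$ up to $O(\log L)$. The linear constant must come out to $\frac12+\log_2 e$ once it is combined with the $+L$ from the summation in Step~2. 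Uniformity in $\lambda\in J$ and in $|\delta_L|\le C$ is immediate, because all corrections are continuous in $\log_2\lambda$ and $\delta_L/2^L\to0$.

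\textbf{Main obstacle.} The hard part is keeping the linear coefficient exact. This means tracking de Bruijn's constant, the $\log\log 2$ corrections from the change of base, and the extra factor of order $B/\log B$ coming from cumulative summation. Each of these shifts the coefficient of $L$ by $O(1)$, and the answer depends on getting all of them right. I would first verify the bookkeeping with the saddle-point heuristic. The generating function is $\prod_{j<k}(1-z^{2^j})^{-1}/(1-z)$, and one sets $z=e^{-s}$ with the saddle $s\approx L^{-1}2^{-L}\cdot\Theta(L)$. This gives
\[
 \log N\approx Bs+\sum_{j<k}\log\frac{1}{1-e^{-s2^j}},
\]
and I would check that this reproduces $\frac12L^2-L\log_2L+(\log_2\lambda+\frac12+\log_2 e)L$ before citing de Bruijn for the rigorous error term.
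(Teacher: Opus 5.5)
\textbf{There is a genuine gap: the coefficient of $L$ is never derived.} Your reduction matches the paper's and is fine with small repairs. The paper uses the sandwich $N_{k,B}\le A(B)\le A(\lfloor B/2^k\rfloor)N_{k,B}$. Your one-coordinate bound $N_{k,B}\le N_{k+1,B}\le(\lfloor B/2^k\rfloor+1)N_{k,B}$ is already $O(1)$ for fixed $a$, so no $2^{O(L)}$ or polynomial refinement is needed. Also, ``$a\ge1$'' does not give $2^k>B$ when $\lambda\ge1$; you need $2^a>\sup J$.

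The coefficient of $L$ is the whole content of the lemma beyond citing de Bruijn. You write a ``$\frac12+\log_2e-\log_2\log2$-type constant'' and then assert that the total ``must come out to'' $\frac12+\log_2e$. That assumes the conclusion.

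The missing idea is an exact identity. From $p(2n+1)=p(2n)$ and $p(2n)=p(2n-1)+p(n)$ one gets $A(B)=\sum_{m\le B}p(m)=p(2B)$. De Bruijn's expansion is naturally a statement about this doubled index:
\[
 \log p(2n)=\frac{1}{2\log2}\log^2\frac{n}{\log n}
 +\Bigl(\frac12+\frac1{\log2}+\frac{\log\log2}{\log2}\Bigr)\log n+O(\log\log n).
\]
Putting $n=B$ and passing to base $2$, the two $\log_2\log2$ contributions cancel and the lemma follows directly, with no separate summation step.

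On your route, the pointwise input you need is
\[
 \log_2p(m)=\frac12T^2-T\log_2T+\Bigl(\log_2e-\frac12\Bigr)T+O((\log T)^2),\qquad T=\log_2m .
\]
That is not the constant you wrote. If you applied the displayed constant to $p(B)$ and then added your ``$+L$ from summation'', you would count the summation twice and get $\frac32+\log_2e$.

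Two further slips:
\begin{itemize}
\item Your first display needs $-\frac{1}{\log2}\log m\log\log m$, and its $O(\log m)$ error cannot see the linear term at all.
\item $\sum_{m\le B}p(m)$ is $\Theta(Bp(B)/\log B)$, not $Bp(B)2^{O(1)}$, and the lower-bound direction would need its own argument.
\end{itemize}

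\textbf{The saddle-point check would not catch this error.} The saddle is at $s\asymp L2^{-L}$, not $\Theta(2^{-L})$. There the variance of the tilted law is $\sigma^2\asymp L/s^2$, so the Gaussian normalisation $-\frac12\log_2(2\pi\sigma^2)=-L+O(\log L)$ is itself of linear order.

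Your expression $Bs+\sum_{j<k}\log(1-e^{-s2^j})^{-1}$ omits that term. It also omits $\log_2(1-e^{-s})^{-1}=L+O(\log L)$, which comes from the factor $1/(1-z)$ you wrote down. These two omissions cancel, so the heuristic reproduces the target only by accident. Applied to $p(B)$, the same heuristic gives $\frac12+\log_2e$ instead of the true $\log_2e-\frac12$.

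Replace your Steps 2--3 with the identity $A(B)=p(2B)$ and substitute directly into de Bruijn's formula, as the paper does; that closes the gap.
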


\begin{proof}
Let $A(B)$ be the unrestricted number of dyadic vectors of total weighted cost
at most $B$; equivalently, $A(B)$ is the binary-partition function at the
corresponding doubled index.  Splitting an unrestricted dyadic vector before
coordinate $k$ gives the exact comparison
\[
 N_{k,B}\le A(B)
 \le A(\lfloor B/2^k\rfloor)N_{k,B}.
 \tag{5.5}\label{eq:truncation-comparison}
\]
Indeed, after division by $2^k$ the tail has budget at most
$\lfloor B/2^k\rfloor$, while the first $k$ coordinates have residual budget
at most $B$.  If $k=L+a$ and $B=\lambda2^L+O(1)$, the first factor on the
right of~\eqref{eq:truncation-comparison} is bounded uniformly for
$\lambda$ in a fixed compact subset of $(0,\infty)$.  Thus finite truncation
changes the logarithm by only $O(1)$.

De Bruijn's expansion for partitions into powers of two, written with
$t=\log B$ and $u=\log t$, has leading square $(t-u)^2/(2\log2)$.
Substituting $t=L\log2+\log\lambda+O(2^{-L})$ gives
\[
 \frac{1}{\log2}\log N_{k,B}
 =\frac12L^2-L\log_2L
  +\left(\log_2\lambda+\frac12+\log_2e\right)L
  +O((\log L)^2).
\]
All substitutions are uniform on $J$, and bounded additive changes of $B$
or fixed changes of $k$ affect only the stated remainder.  The key sign is the
negative $L\log_2L$ term, coming from the cross term in $(t-u)^2$.
\end{proof}

Under the geometric law, the count is multiplied by the atom weights.  If
$\mu=\theta2^L$ with $\theta\in(1/2,1]$, $k=L+s$, and
$B=\lambda2^L+O(1)$, then every vector in $\mathcal E_{k,B}$ has ordinary
mass at most $B$.  Consequently
\[
 p^kr^B N_{k,B}
 \le \Prob_{\prodprob}(\mathcal E_{k,B})
 \le p^kN_{k,B},
 \tag{5.6}\label{eq:tilt-sandwich}
\]
and, because $B=O(\mu)$,
$-B\log_2r=O(1)$.  Combining~\eqref{eq:dyadic-asymptotic} with
$-k\log_2p=L^2+(s+\log_2\theta)L+O(1)$ gives
\[
 -\log_2\Prob_{\prodprob}(\mathcal E_{k,B})
 =\frac12L^2+L\log_2L
  +\gamma(\theta;s,\lambda)L+O((\log L)^2),
 \tag{5.7}\label{eq:simplex-prob}
\]
where
\[
 \gamma(\theta;s,\lambda)
 =s+\log_2\theta-\log_2\lambda-\frac12-\log_2e.
 \tag{5.8}\label{eq:gamma}
\]

\subsection{A one-sided message excursion}

Let an active message chain evolve by
$M_j=F(M_{j-1}+X_j)$ under the product law~\eqref{eq:geometric}.  Set
\[
 L=\lceil\log_2\mu\rceil,
 \qquad \theta=\mu/2^L\in(1/2,1],
\]
and for an integer starting state $m\in[\mu,2\mu]$ define
\[
 q_\mu(m)
 =\Prob_m\!\left(\min_{1\le j\le4L}M_j\le-\mu\right).
 \tag{5.9}\label{eq:q}
\]

\begin{theorem}[Uniform local excursion rate]\label{thm:local-rate}
Uniformly over all integer $m\in[\mu,2\mu]$,
\[
 -\log_2q_\mu(m)
 =L^2+2L\log_2L
  +\bigl(2\log_2\theta-2\log_2(3e)\bigr)L+o(L).
 \tag{5.10}\label{eq:local-L}
\]
Equivalently, with $x=\log_2\mu$,
\[
 -\log_2q_\mu(m)
 =R(x)+o(x),
 \qquad
 R(x)=x^2+2x\log_2x-2\log_2(3e)x,
 \tag{5.11}\label{eq:local-x}
\]
uniformly in the same starting window.
\end{theorem}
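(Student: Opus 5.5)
The plan is to prove matching lower and upper bounds on $q_\mu(m)$ by splitting every excursion to $-\mu$ into two dyadic phases, each controlled by the simplex estimate~\eqref{eq:simplex-prob}.

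\emph{Descent phase.} In the high branch of~\eqref{eq:F}, $F(y)=y/2+O(1)$. Starting from $m\in[\mu,2\mu]$, a run of $k_1$ steps therefore gives
\[
M_{k_1}=2^{-k_1}m+\sum_{j=1}^{k_1}2^{-(k_1-j+1)}X_j+O(1).
\]
For the message to reach the bottom of the high branch ($M+X\le 1$), the occupancies must satisfy a reversed dyadic constraint $\sum_j 2^{j-1}X_{k_1-j+1}\le \lambda_1 2^{L}+O(1)$. Here $\lambda_1$ is determined by $\theta$, by the starting ratio $m/2^L$, and by $k_1-L$. Because $m\in[\mu,2\mu]$, the starting ratio stays in a compact set. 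This is exactly a set $\mathcal E_{k_1,B}$ with $k_1=L+s_1$.

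\emph{Deficit phase.} Once in the low phase, Lemma~\ref{lem:deficit} gives $Z_{k_2}=2^{k_2}Z_0-\sum_j 2^{k_2-j+1}c_j$, with $Z_0=O(1)$. Reaching $M\le-\mu$ means $Z_{k_2}\ge\mu+3$. After dividing by $2^{k_2}$ and reindexing, this again becomes a dyadic simplex $\sum_j 2^{j}c_{j}\le\lambda_2 2^{L}+O(1)$ with $k_2=L+s_2$, where $\lambda_2$ depends on $\theta$ and $s_2$.

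Since the two blocks use disjoint occupancies and the product law is independent, the probability of a given split is the product of two factors. Each factor is estimated by~\eqref{eq:simplex-prob}, which gives
\[
\tfrac12L^2+L\log_2L+\gamma(\theta;s_i,\lambda_i)L+O((\log L)^2).
\]
Summing the two factors produces the $L^2+2L\log_2L$ leading part of~\eqref{eq:local-L}. The linear coefficient is $\gamma(\theta;s_1,\lambda_1)+\gamma(\theta;s_2,\lambda_2)$, to be minimised over the integer shifts $s_1,s_2$ and the induced budgets.

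\emph{Lower bound on $q_\mu$.} I fix the optimal pair $(s_1,s_2)$ and exhibit the explicit event "block one lies in its simplex, then block two lies in its simplex". Its total length $2L+O(1)$ is at most $4L$. I then check deterministically that on this event the chain really stays in the intended branches, up to the bounded parity corrections from the odd, $y=2$ and $y=3$ cases of $F$. These corrections only shift budgets by $O(1)$, which Lemma~\ref{lem:dyadic-asymptotic} absorbs through $\delta_L$.

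\emph{Upper bound on $q_\mu$.} I take a union over the first hitting time $\le4L$ and over the last time the chain was in the high branch. This gives $O(L^2)$ choices of $(k_1,k_2)$, which cost $O(\log L)=o(L)$ in the exponent. For each choice I show that the two necessary dyadic constraints hold, with budgets at most the ones above plus $O(1)$. Any step of the descent with $X_j$ large only enlarges the remaining requirement, so it can be discarded by monotonicity of $F$. Choices of $k_i$ far from $L$, meaning $|s_i|$ large, must be shown to give a strictly worse $\gamma$. I plan to handle them by crude comparisons: a very short block forces $B<0$ or cost at least $2^{L}$-scale, and a very long block costs $k\log_2(1+\mu)$ extra per step. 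This leaves finitely many shifts, or $o(L)$-many, to which the uniform asymptotic applies.

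\emph{Main obstacle.} I expect the hardest step to be the exact bookkeeping of the linear coefficient: identifying $\lambda_1,\lambda_2$ as functions of $\theta,s_1,s_2$ and showing that the minimum of $\gamma_1+\gamma_2$ equals $2\log_2\theta-2\log_2(3e)$. The constant $3$ should come from the interface between the two phases, where the deficit starts at $Z_0=3-M$ with $M+X\le1$. The factor $e$ comes from the two $\log_2 e$ terms in~\eqref{eq:gamma}. I would first compute $\gamma_1+\gamma_2$ symbolically, check that its dependence on the shifts cancels as $s_i$ and $\log_2\lambda_i$ move together, and only then verify uniformity in $m\in[\mu,2\mu]$. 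Uniformity should follow because the starting state enters only through a bounded factor in $\lambda_1$. Finally, I convert~\eqref{eq:local-L} to~\eqref{eq:local-x} by substituting $L=x-\log_2\theta$ with $|\log_2\theta|<1$.
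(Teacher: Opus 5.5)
Your two-phase architecture matches the paper's, but the step that carries the constant fails. The parity corrections from the odd, $y=2$ and $y=3$ cases of $F$ do \emph{not} shift budgets by $O(1)$. Each high-branch step loses $0$ or $3/2$ relative to $y/2$. So the $O(1)$ error you correctly have in $M_{k_1}$ becomes an error of up to $3(2^{k_1}-1)$ once you multiply by $2^{k_1}$ to form a budget. That changes $\lambda_1$ by a constant, hence the linear coefficient, which is exactly where $\log_2(3e)$ lives. For the same reason $F$ is not monotone ($F(3)=0<F(2)=1$), so discarding large $X_j$ ``by monotonicity of $F$'' is not available.

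A deterministic simplex event can only use $F(y)\le y/2$. The condition $\sum_j2^{j-1}X_j<2^{k_1+1}-m$ forces only $M_{k_1}\le1$, so the deficit phase may start from $Z_0=2$. As $s_1,s_2\to\infty$ this gives linear coefficient $2\log_2\theta-2-2\log_2e$ rather than $2\log_2\theta-\log_29-2\log_2e$.

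The missing idea is the paper's two-colour slack. For $M'\ge1$ the high branch has exactly two preimages, $2M'$ and $2M'+3$, while $0$ has only $y=3$. Hence descents from $m$ ending at $a\in\{0,1\}$ after $T$ steps correspond, up to positivity constraints, to pairs $(X,\sigma)$ with $\sigma_j\in\{0,3\}$ and $\sum_{j=1}^T2^{j-1}(X_j+\sigma_j)=(a+3)2^T-3-m$. The sum $\sum_j2^{j-1}\sigma_j$ runs injectively through the multiples of $3$ up to $3(2^T-1)$. Summing over $\sigma$ therefore turns this exact equation into, up to $2^{o(L)}$ factors, the cumulative simplex with budget $(a+3)2^T-m$. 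Your lower bound has to count trajectories this way. In particular, one of the two factors $3$ in $(3e)^2$ comes from the descent budget at $a=0$, not only from $Z_0=3$ at the interface as you predicted.

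Even with these budgets, your upper bound cannot treat the phases separately. The terminal value $a$ enters the descent as $a+3$ and the deficit as $Z_0=3-a$. Taking the worst case in each phase therefore selects ``descent ends at $1$, final low run starts at $0$'', whose coefficient $2\log_2\theta-2-\log_23-2\log_2e$ is too small. The paper excludes this route because the switch needs an update with output exactly $0$, i.e.\ $y=3$. That prescribes one occupancy and costs $L+O(1)$ bits; after this penalty, $0\to0$ is optimal since $3\cdot3>4\cdot2$.

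Relatedly, your block one (from the start to the last high-branch time) may contain low-phase steps. There $F(y)=2y-3\le(y-3)/2$, so the backward inequality $M_{j-1}+X_j\le 2M_j+3$ reverses. Use the first descent and the final low run instead.

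Finally, there is no optimal finite pair $(s_1,s_2)$, and long blocks are not penalised at linear order. An extra coordinate costs about $L$ bits but doubles the admissible budget. Coefficients such as $s+\log_2\theta-\log_2((a+3)2^s-u)-\tfrac12-\log_2e$ decrease strictly to their limits as $s\to\infty$. So the lower bound needs the double limit ($L\to\infty$, then $s\to\infty$). The upper bound should use fixed windows rather than crude long-block penalties:
\begin{itemize}
\item the last $L-2$ descent steps, where $M\ge0$ gives budget $\tfrac{a+3}{4}2^L+O(1)$;
\item the first $L-2$ low steps, where $Z\ge4$ gives budget $\tfrac{3-a}{8}2^L+O(1)$.
\end{itemize}
These windows produce the limiting coefficients directly.
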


\begin{proof}
A first descent from the positive phase must enter the low phase at terminal
message $a\in\{0,1\}$.  Working backwards through the last $L-2$ positive
updates gives the necessary dyadic budget
\[
 B_a^+=\frac{a+3}{4}\,2^L-5.
 \tag{5.12}\label{eq:Bplus}
\]
Likewise, a final low-phase run that starts from $a\in\{0,1\}$ and reaches
order-$\mu$ deficit has necessary budget
\[
 B_a^-=\frac{3-a}{8}\,2^L-2.
 \tag{5.13}\label{eq:Bminus}
\]
The reverse positive phase carries a two-colour parity slack, but its exact
generating function differs from the uncoloured binary-partition count by
only a bounded logarithmic factor in the relevant range; it therefore changes
none of the quadratic, $L\log L$, or linear coefficients.

Applying~\eqref{eq:simplex-prob} to~\eqref{eq:Bplus} gives positive-entry
linear coefficient
\[
 \gamma_a^+(\theta)
 =\log_2\theta-\log_2(a+3)-\frac12-\log_2e,
 \tag{5.14}\label{eq:gamma-plus}
\]
while~\eqref{eq:Bminus} gives
\[
 \gamma_a^-(\theta)
 =\log_2\theta-\log_2(3-a)+\frac12-\log_2e.
 \tag{5.15}\label{eq:gamma-minus}
\]
There are four terminal/start combinations.  If the positive phase ends at
$1$ and the final low phase starts at $0$, an intermediate update must output
exactly $0$.  Since $F(y)=0$ only for $y=3$, one geometric occupancy is then
prescribed; this bridge costs $L+O(1)$ bits.  After that penalty is included,
the unique cheapest route at linear order is the $0\to0$ route.  Adding
\eqref{eq:gamma-plus} and~\eqref{eq:gamma-minus} at $a=0$ gives
\[
 2\log_2\theta-\log_2 9-2\log_2e
 =2\log_2\theta-2\log_2(3e).
\]
Polynomially many choices of the entry, bridge, and bounded middle cluster
cost only $o(L)$ on the logarithmic scale.  This proves the upper probability
bound in~\eqref{eq:local-L}.

For the matching lower bound, delay the positive entry and low run by a fixed
number $s$ of coordinates and prescribe the finitely many top reverse slacks.
Write $u=m/2^L\in[\theta,2\theta]$.  For a positive block of length $L+s$,
the remaining dyadic budget has scale
\[
 \lambda_{a,s}^+(u)=(a+3)2^s-u,
\]
and Lemma~\ref{lem:dyadic-asymptotic} gives linear coefficient
\[
 g_{a,s}^+(\theta,u)
 =s+\log_2\theta-\log_2((a+3)2^s-u)-\frac12-\log_2e.
\]
For the delayed low phase, fix the first $s+2$ occupancies to zero.  The
remaining reversed simplex has scale
\[
 \lambda_{a,s}^-(\theta)=(3-a)2^{s-1}-\theta/2
\]
and linear coefficient
\[
 g_{a,s}^-(\theta)
 =s+\log_2\theta
  -\log_2((3-a)2^{s-1}-\theta/2)-\frac12-\log_2e.
\]
For every fixed sufficiently large $s$, these scales lie in compact positive
intervals uniformly over $\theta\in(1/2,1]$ and
$u\in[\theta,2\theta]$.  Moreover
$g_{a,s}^+\to\gamma_a^+$ and
$g_{a,s}^-\to\gamma_a^-$ uniformly as $s\to\infty$.  Thus one first lets
$L\to\infty$ for fixed $s$ and then $s\to\infty$, obtaining an $o(L)$
remainder and the lower bound matching~\eqref{eq:local-L}.

Finally $x=L+\log_2\theta$ with bounded $\log_2\theta$.  Substitution cancels
the dyadic phase from the linear term, giving~\eqref{eq:local-x}.
\end{proof}

\section{Regeneration and fixed-total transfer}\label{sec:conditioning}

The local theorem starts from $[\mu,2\mu]$, but a spatial scan arrives with an
arbitrary history.  The following one-coordinate steering step removes that
obstruction without changing the logarithmic rate.

\begin{lemma}[Constant-cost regeneration]\label{lem:regeneration}
Let $-\mu<M\le2\mu$ and let $X$ be geometric with mean $\mu$.  If
\[
 2\mu+2-M\le X\le4\mu-M,
 \tag{6.1}\label{eq:steering-window}
\]
then
\[
 \mu\le F(M+X)\le2\mu.
\]
Writing $r=\mu/(\mu+1)$, the least probability of
\eqref{eq:steering-window} over this range of $M$ is
\[
 \delta_\mu=r^{3\mu+1}(1-r^{2\mu-1}),
 \tag{6.2}\label{eq:delta}
\]
and for every integer $\mu\ge16$,
\[
 \delta_\mu\ge \frac{8}{17e^3}.
 \tag{6.3}\label{eq:delta-bound}
\]
\end{lemma}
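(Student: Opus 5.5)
The plan is to prove the three assertions of Lemma~\ref{lem:regeneration} in order. Each is a direct computation: the first uses the transfer function~\eqref{eq:F}, the second the geometric law~\eqref{eq:geometric}. No earlier structural result is needed beyond the definition of $F$.

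\emph{Steering.} Put $y=M+X$. The window~\eqref{eq:steering-window} is exactly the condition $2\mu+2\le y\le4\mu$. Since $\mu\ge1$ we have $y\ge4$, so only the two high branches of~\eqref{eq:F} are relevant, and I split on the parity of $y$.
\begin{itemize}[label={}]
\item If $y$ is even, then $F(y)=y/2\in[\mu+1,2\mu]$.
\item If $y$ is odd, then $2\mu+3\le y\le4\mu-1$, so $F(y)=(y-3)/2\in[\mu,2\mu-2]$.
\end{itemize}
In both cases $\mu\le F(M+X)\le2\mu$. I would also check that the window is nonempty and lies in $\Z_{\ge0}$. Its lower endpoint is $a=2\mu+2-M\ge2$ because $M\le2\mu$. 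Its length is $(4\mu-M)-(2\mu+2-M)+1=2\mu-1\ge1$, independent of $M$.

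\emph{Least probability.} For integers $0\le a\le b$ we have $\Prob(a\le X\le b)=r^a-r^{b+1}=r^a(1-r^{b-a+1})$. Here $b-a+1=2\mu-1$ is fixed, so the probability equals $r^{a}(1-r^{2\mu-1})$. This is decreasing in $a$, hence increasing in $M$. The range $-\mu<M\le2\mu$ with $M$ an integer has smallest element $M=1-\mu$, which gives $a=3\mu+1$. This yields exactly~\eqref{eq:delta}.

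\emph{Numerical bound.} This is the only step needing any care, and it is elementary. I would bound the two factors of~\eqref{eq:delta} separately.
\begin{itemize}[label={}]
\item From $(1+1/\mu)^\mu\le e$ we get $r^{3\mu}\ge e^{-3}$. For $\mu\ge16$ we have $r\ge16/17$. Together these give $r^{3\mu+1}\ge 16/(17e^3)$.
\item It then suffices to show $r^{2\mu-1}\le1/2$. Use $\log(1+1/\mu)\ge 1/\mu-1/(2\mu^2)$. Then $(2\mu-1)\log(1+1/\mu)\ge 2-2/\mu\ge 15/8>\log2$ for $\mu\ge16$.
\end{itemize}
Multiplying the two bounds gives $\delta_\mu\ge\frac{16}{17e^3}\cdot\frac12=\frac{8}{17e^3}$, which is~\eqref{eq:delta-bound}.
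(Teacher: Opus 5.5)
Your proposal is correct and follows the paper's proof step for step. The steps are the parity split on $y=M+X\in[2\mu+2,4\mu]$ over the high branches of $F$, then the interval mass $r^{2\mu+2-M}(1-r^{2\mu-1})$ minimised at $M=1-\mu$, then the elementary bound from $(1+1/\mu)^{\mu}\le e$. Your explicit factorisation $\frac{16}{17e^3}\cdot\frac12$, using $r\ge 16/17$ and $r^{2\mu-1}\le\frac12$, just fills in what the paper leaves as ``standard bounds.''
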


\begin{proof}
The interval~\eqref{eq:steering-window} gives
$2\mu+2\le M+X\le4\mu$.  The positive branches of~\eqref{eq:F} then imply
$\mu\le F(M+X)\le2\mu$.  Summing the geometric mass of the interval gives
$r^{2\mu+2-M}(1-r^{2\mu-1})$, minimized at $M=-\mu+1$, which is
\eqref{eq:delta}.  The elementary estimate in~\eqref{eq:delta-bound} follows
from the standard bounds on $(1+1/\mu)^{-\mu}$ and is uniform for
$\mu\ge16$.
\end{proof}

If a reset block already exits with $M\le-\mu$, there is no need to regenerate:
a deep seed has already occurred.  Otherwise Lemma~\ref{lem:regeneration}
puts the chain in the starting window of Theorem~\ref{thm:local-rate} at
constant cost.

We next record the exact local comparison between the product law and the
fixed-total law.  For $a\ge0$ and integer $j$, write
$(a)_j=a(a-1)\cdots(a-j+1)$.

\begin{lemma}[Finite local likelihood ratio]\label{lem:likelihood-ratio}
Fix $k$ displayed coordinates of a configuration with total $t$ on $n$
vertices, and suppose their displayed vector has local mass $s$.  Compare the
uniform weak-composition law with independent geometric variables having
$p=n/(n+t)$ and $r=t/(n+t)$.  The exact ratio of the local probabilities is
\[
 \mathcal R_{n,t,k}(s)
 =\frac{(n-1)_k(t)_s}{(n+t-1)_{k+s}}
   \bigg/
   \left(\frac n{n+t}\right)^k
   \left(\frac t{n+t}\right)^s.
 \tag{6.4}\label{eq:local-ratio}
\]
If $k\le n/2$, $s\le t/2$, and $k+s\le(n+t)/2$, then
\[
 |\log\mathcal R_{n,t,k}(s)|
 \le \frac{k(k+1)}n+\frac{s(s-1)}t
      +\frac{(k+s)(k+s+1)}{n+t}.
 \tag{6.5}\label{eq:ratio-bound}
\]
The same bound applies to any event supported on those $k$ coordinates whose
local mass is bounded by $s$.
\end{lemma}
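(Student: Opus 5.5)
We compute both local probabilities exactly and then compare logarithms one factor at a time. Fix a displayed vector $a\in\Z_{\ge0}^k$ on the chosen coordinates with $|a|=s$.

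\textbf{Exact probabilities.} Under the uniform weak-composition law, the undisplayed $n-k$ coordinates form a weak composition of $t-s$, so
\[
 \Prob_{\wc}(a)=\binom{n-k+t-s-1}{t-s}\bigg/\binom{n+t-1}{t}.
\]
Expanding both binomials as factorials, this equals
\[
 \frac{(n-1)!\,t!\,(n+t-1-k-s)!}{(n-k-1)!\,(t-s)!\,(n+t-1)!}
 =\frac{(n-1)_k(t)_s}{(n+t-1)_{k+s}}.
\]
Under the product law with $p=n/(n+t)$ and $r=t/(n+t)$, the local probability is $p^kr^s$. Dividing gives \eqref{eq:local-ratio}. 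The right-hand side depends on $a$ only through $s$, so for any event supported on the displayed coordinates the ratio of probabilities is a convex combination of the values $\mathcal R_{n,t,k}(s')$ over the local masses $s'$ the event allows. That yields the final sentence, provided the bound \eqref{eq:ratio-bound} is monotone in $s$, which it is.

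\textbf{Factor-by-factor logarithms.} Rewrite the ratio as a product of three pieces:
\[
 \mathcal R=\prod_{i=1}^{k}\frac{n-i}{n}\cdot\prod_{i=0}^{s-1}\frac{t-i}{t}\cdot\prod_{i=1}^{k+s}\frac{n+t}{n+t-i}.
\]
This uses $(n+t-1)_{k+s}=\prod_{i=1}^{k+s}(n+t-i)$ and the fact that the powers of $n+t$ cancel, since there are $k+s$ of them in both numerator and denominator. Each factor has the form $1-x$ with $0\le x\le 1/2$, by the hypotheses $k\le n/2$, $s\le t/2$ and $k+s\le(n+t)/2$. On that range we use the elementary bound $0\le-\log(1-x)\le 2x$.

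The three pieces then satisfy:
\begin{itemize}
\item the first piece has $|\log|$ at most $\sum_{i=1}^k 2i/n=k(k+1)/n$;
\item the second piece has $|\log|$ at most $\sum_{i<s}2i/t=s(s-1)/t$;
\item the third piece has $|\log|$ at most $(k+s)(k+s+1)/(n+t)$.
\end{itemize}
The first two logarithms are nonpositive and the third is nonnegative. The triangle inequality therefore gives \eqref{eq:ratio-bound}; in fact it gives it with some room, since the terms partially cancel.

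\textbf{Main obstacle.} The only delicate point is the boundary case, where the third product reaches $i=k+s=(n+t)/2$ and $x=1/2$. There one needs $-\log(1/2)=\log 2\le 1$, which holds, so the constant $2$ in the elementary bound suffices. We also adopt the convention that the ratio is taken only where $s\le t$, so all factorials are defined.
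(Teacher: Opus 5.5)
Your proposal is correct and follows the paper's proof essentially step for step. You use stars and bars for the point probability, rewrite the ratio as $\prod_{i=1}^{k}(1-i/n)\prod_{j=0}^{s-1}(1-j/t)\prod_{\ell=1}^{k+s}(1-\ell/(n+t))^{-1}$, apply $|\log(1-u)|\le 2u$ on $[0,1/2]$ and sum the three arithmetic series, and then handle mass-capped events by summing point probabilities; your convex-combination and monotonicity remark spells out what the paper calls ``the same extremal bound.''
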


\begin{proof}
Stars and bars gives the weak-composition probability of a specified local
vector.  Dividing it by the product-geometric mass yields
\eqref{eq:local-ratio}, equivalently
\[
 \prod_{i=1}^k\left(1-\frac in\right)
 \prod_{j=0}^{s-1}\left(1-\frac jt\right)
 \prod_{\ell=1}^{k+s}\left(1-\frac\ell{n+t}\right)^{-1}.
\]
Under the half-range hypotheses, apply
$|\log(1-u)|\le2u$ for $0\le u\le1/2$ and sum the three arithmetic series.
For an event with a deterministic mass cap, sum point probabilities and use
the same extremal bound.
\end{proof}

Two separated blocks are treated as one displayed vector; no conditional
independence is asserted.  In the regime of Theorem~\ref{thm:main}, the local
patterns used below have support $O((\log n)^2)$ at worst and mass
$O(\mu(\log n)^2)$, so~\eqref{eq:ratio-bound} is $o(1)$.

For the lower side we also need the cost of conditioning on the global total.

\begin{lemma}[Conditioning point mass]\label{lem:conditioning-mass}
For independent geometric variables of mean $\mu$,
\[
 \Prob_{\prodprob}(T=n\mu)
 =\binom{n+n\mu-1}{n\mu}
   \left(\frac1{1+\mu}\right)^n
   \left(\frac\mu{1+\mu}\right)^{n\mu}.
 \tag{6.6}\label{eq:negative-binomial}
\]
Uniformly when $\mu=2^{O(\sqrt{\log_2 n})}$,
\[
 -\log_2\Prob_{\prodprob}(T=n\mu)
 =\frac12\log_2n+\frac12\log_2(\mu(\mu+1))+O(1).
 \tag{6.7}\label{eq:conditioning-cost}
\]
\end{lemma}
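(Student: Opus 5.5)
The identity \eqref{eq:negative-binomial} is exact and comes straight from the product representation. Every vector $a\in\Z_{\ge0}^n$ with $\sum_i a_i=n\mu$ has product-law probability $p^nr^{n\mu}$, by the computation preceding \eqref{eq:conditioning-identity}. There are exactly $|\mathcal D_{n,n\mu}|=\binom{n+n\mu-1}{n\mu}$ such vectors. Multiplying the count by the common weight, with $p=1/(1+\mu)$ and $r=\mu/(1+\mu)$, gives \eqref{eq:negative-binomial}. No further argument is needed for this part.

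For \eqref{eq:conditioning-cost} I would apply Stirling's formula with explicit error terms to the binomial coefficient. Write $t=n\mu$ and use
\[
\binom{n+t-1}{t}=\frac{n}{n+t}\binom{n+t}{t}.
\]
Stirling in the form $\log m!=m\log m-m+\tfrac12\log(2\pi m)+O(1/m)$ then gives
\[
\log\binom{n+t}{t}=(n+t)\log(n+t)-n\log n-t\log t+\tfrac12\log\frac{n+t}{2\pi nt}+O(1/n).
\]
Since $t=n\mu$, we have $n+t=n(1+\mu)$. The entropy part is therefore $n\log(1+\mu)+t\log\frac{1+\mu}{\mu}$. This is exactly $-\log(p^nr^t)$, so it cancels against the geometric factor. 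What remains is
\[
\log \Prob_{\prodprob}(T=n\mu)=\log\frac{1}{1+\mu}+\frac12\log\frac{1+\mu}{2\pi n\mu}+O(1/n).
\]
Simplifying the right-hand side gives $-\tfrac12\log n-\tfrac12\log(\mu(\mu+1))-\tfrac12\log(2\pi)+O(1/n)$. Converting to base $2$ yields \eqref{eq:conditioning-cost}, and in fact with an explicit constant $\tfrac12\log_2(2\pi)$ inside the $O(1)$.

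The only point needing care is uniformity of the error terms in $\mu$. The Stirling remainders are $O(1/n)$, $O(1/t)$ and $O(1/(n+t))$, all of which are $O(1/n)$ uniformly for every $\mu\ge1$. The prefactor $n/(n+t)=1/(1+\mu)$ is also exact. So the expansion in fact holds uniformly for all positive integers $\mu$. The hypothesis $\mu=2^{O(\sqrt{\log_2 n})}$ is used only to record that the right-hand side of \eqref{eq:conditioning-cost} is then $\tfrac12\log_2 n+O(\sqrt{\log_2 n})=o(\sqrt{\log_2n}\cdot\sqrt{\log_2 n})$. That is the form in which the conditioning cost is absorbed later on the lower side. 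I expect no real obstacle beyond checking that the cancellation of the entropy term against $p^nr^t$ is exact rather than approximate. It is exact, because the geometric parameters are tuned to the mean $\mu=t/n$.
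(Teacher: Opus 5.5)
Your proof is correct and follows the paper's route. You write out in full the exact count $\binom{n+n\mu-1}{n\mu}$ times the common atom weight $p^nr^{n\mu}$, then apply Stirling so that the entropy term cancels exactly against $p^nr^{n\mu}$. You are also right that the $O(1/n)$ remainder makes the estimate uniform over all integers $\mu\ge1$, with explicit constant $\tfrac12\log_2(2\pi)$.

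The only slip is in your closing aside: $\tfrac12\log_2 n+O(\sqrt{\log_2 n})$ is $\Theta(\log_2 n)$, not $o(\log_2 n)$. The later subcritical step only needs the $O(\log_2 n)$ bound, since $2^{O(\log_2 n)}\exp\bigl(-2^{(2\eps+o(1))\sqrt{\log_2 n}}\bigr)\to0$.
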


\begin{proof}
Equation~\eqref{eq:negative-binomial} is the negative-binomial point mass.
Applying two-sided Stirling bounds to its factorials, with cancellation before
taking logarithms, gives~\eqref{eq:conditioning-cost}.  The error is uniform
in the displayed range.  Writing $x=\log_2\mu$, the right-hand side is
$\frac12\log_2n+x+O(1)$.
\end{proof}

\section{Global proof of the transition}\label{sec:global}

We now assemble the deterministic and probabilistic ingredients.  Let
\[
 N=\log_2n,\qquad s=\sqrt N,\qquad a=\log_2(3e),
\]
so $c_n=s-\log_2s+a$.

\subsection{A uniform front-producing superblock}

A reserved superblock contains four pieces.  First is a reset block of length
$\lceil\log_2(4n)\rceil$.  The global bound $F(y)\le y/2$ shows that, whenever
the total path mass is at most $2n\mu$, the event
\[
 X_1>0,
 \qquad
 \sum_{j=1}^r2^{-(r-j+1)}X_j\le\frac{3\mu}{2}
 \tag{7.1}\label{eq:reset}
\]
sends every incoming state to a message at most $2\mu$.  Its product
probability is bounded below by a positive constant for large $\mu$ (indeed,
by $1/3-1/(\mu+1)$ in the explicit estimate used by the project).

If the reset output is already at most $-\mu$, it is a deep seed.  Otherwise
Lemma~\ref{lem:regeneration} enters $[\mu,2\mu]$ at constant cost, and
Theorem~\ref{thm:local-rate} gives a deep seed within $4L$ further coordinates
with logarithmic cost $R(x)+o(x)$, $x=\log_2\mu$.  Finally, a low-phase
runaway buffer requires each successive occupancy to be at most a fixed
fraction of the current certified deficit.  By~\eqref{eq:deficit-step}, the
deficit then grows by at least a factor $3/2$ per step.  The product of these
conditional success probabilities is bounded below by a positive constant,
and after $O(\log n)$ steps the deficit exceeds $2n\mu+3$.  At that point it
dominates all remaining mass, so Proposition~\ref{prop:front} produces a
genuine irreversible front.

Consequently a reserved superblock has length
\[
 b_n=O(\log n)
 \tag{7.2}\label{eq:block-length}
\]
and, uniformly over preceding product-law history on the reserve event, has
success probability $p_{\mu,n}$ satisfying
\[
 -\log_2p_{\mu,n}=R(x)+o(x).
 \tag{7.3}\label{eq:block-rate}
\]
This is a sequential hazard statement; the successful-block event need not be
a function of the isolated block alone.

\subsection{Subcritical side}

Assume
\[
 x=\log_2\mu\le c_n-\eps.
\]
Place disjoint reserved superblocks in the left half of the path.  Their
number $B_n$ satisfies $B_n=\Omega(n/\log n)$.  Fresh geometric coordinates in
each unused block and~\eqref{eq:block-rate} give
\[
 \Prob_{\prodprob}(\text{no successful left front and }T\le2n\mu)
 \le (1-p_{\mu,n})^{B_n}
 \le \exp(-B_np_{\mu,n}).
 \tag{7.4}\label{eq:no-front}
\]
The same construction, reversed, applies in the right half.

Conditioning on $T=n\mu$ makes the reserve $T\le2n\mu$ automatic.  From
\eqref{eq:conditioning-identity},
\[
 \Prob_{\wc}(\text{no left front})
 \le
 \frac{\exp(-B_np_{\mu,n})}
      {\Prob_{\prodprob}(T=n\mu)}.
 \tag{7.5}\label{eq:subcritical-conditioning}
\]
There is no conditioned-independence claim here.  Apply the same bound on the
right and then use a union bound.

The fixed-offset calculation in Subsection~\ref{subsec:balance} below yields
\[
 N-R(c_n-\eps)=(2\eps+o(1))s.
 \tag{7.6}\label{eq:sub-rate}
\]
Since $B_n$ loses only an $O(\log N)$ term in its base-two logarithm,
\[
 \log_2(B_np_{\mu,n})=(2\eps+o(1))s.
 \tag{7.7}\label{eq:hazard-growth}
\]
Thus $B_np_{\mu,n}=2^{(2\eps+o(1))s}$, whereas
Lemma~\ref{lem:conditioning-mass} says that the reciprocal conditioning point
mass costs only $O(N)$ bits.  The numerator in
\eqref{eq:subcritical-conditioning} therefore wins by a large margin, and the
probability that either required front is missing tends to zero.  The two
successful half-path fronts have exclusion regions covering the entire path,
so Proposition~\ref{prop:front} implies nonstackability with probability
tending to one.

If $\mu$ stays bounded along a subsequence, the asymptotic local excursion
theorem is unnecessary.  For each bounded positive integer mean there is a
fixed finite front-producing pattern of positive product probability; among
finitely many means this probability has a positive minimum.  Repeating the
same $\Theta(n/\log n)$ block construction gives an exponentially small
no-front probability, which again dominates the polynomial-size conditioning
cost.  Hence the subcritical conclusion holds for arbitrary integer sequences
satisfying the hypothesis.

\subsection{Supercritical side}

Assume now
\[
 x=\log_2\mu\ge c_n+\eps.
\]
Then $\mu\to\infty$.  By Corollary~\ref{cor:deep-fixed}, every nonstackable
configuration has a directed message at most $-(2\mu-1)$.  It is therefore
enough to show that such a message is absent with high probability.

Consider the first occurrence of a deep directed message in one orientation.
For an interior occurrence, the necessary positive-entry block and final
low-phase block are disjoint.  Their dyadic-simplex costs are exactly the ones
used in Theorem~\ref{thm:local-rate}.  A bounded middle cluster and the
polynomially many choices of entry, bridge, and cluster locations contribute
only $o(x)$ to the logarithm.  The $1\to0$ route again pays the exact-output-zero
bridge cost because $F(y)=0$ only at $y=3$.  Reversal has the same estimates.
Thus the product probability at one interior spatial location is at most
\[
 2^{-R(x)+o(x)}.
 \tag{7.8}\label{eq:deep-location}
\]
There are $O(n)$ locations.  A physical-boundary deep hit has only $O(1)$
locations and is controlled by a single half-rate simplex; its probability
tends to zero without an $n$ factor.

Each local upper-cover pattern has support $O(L^2)$ and mass $O(\mu L^2)$.
Thus Lemma~\ref{lem:likelihood-ratio} transfers its probability to the
fixed-total law with factor $\exp(o(1))$.  Summing the cover gives
\[
 \Prob_{\wc}(\exists\text{ directed message}\le-(2\mu-1))
 \le 2^{N-R(x)+o(x)}+o(1).
 \tag{7.9}\label{eq:super-cover}
\]
By the fixed-offset calculation,
\[
 N-R(c_n+\eps)=-(2\eps+o(1))s,
 \tag{7.10}\label{eq:super-rate}
\]
so the right-hand side of~\eqref{eq:super-cover} tends to zero.  The
contrapositive of Corollary~\ref{cor:deep-fixed} now gives stackability with
probability tending to one.

\subsection{Fixed-offset algebra}\label{subsec:balance}

It remains only to verify~\eqref{eq:sub-rate} and~\eqref{eq:super-rate}.  Put
$x=c_n+\delta=s-\log_2s+a+\delta=s+u$, where
$u=-\log_2s+a+\delta=O(\log s)$.  Expanding
\[
 \log_2(s+u)=\log_2s+\frac{u}{s\ln2}
             +O\!\left(\frac{u^2}{s^2}\right)
\]
in~\eqref{eq:R} gives
\[
 R(c_n+\delta)=N+2\delta s+O((\log s)^2).
 \tag{7.11}\label{eq:fixed-offset-expansion}
\]
The local theorem contributes an additional $o(x)=o(s)$ term, uniformly in
the dyadic phase.  Hence
\[
 N-R(c_n+\delta)=-2\delta s+o(s),
 \tag{7.12}\label{eq:fixed-offset-final}
\]
which proves both displayed fixed-offset estimates and completes the proof of
Theorem~\ref{thm:main}.  Notice that changing the sign of the half-log-log term
would fail to cancel the order-$s\log s$ term, while changing the constant
$\log_2(3e)$ would leave an order-$s$ mismatch.

\section{Computational checks and discovery record}\label{sec:computation}

Computation was used to discover the correct scale, test finite interfaces,
and guard against indexing errors, but not as a substitute for any asymptotic
proof above.  The project maintains two independent finite decision methods:
exhaustive legal-move reachability, which knows nothing about branch messages,
and the TreeStack score evaluator, which does not call the reachability solver.
The committed validation script compares them on every labelled tree through
order $5$ and every weak composition through total mass $5$.

For paths, an exact atlas records integer success counts and rational
probabilities on the default rectangle $2\le n\le9$, $0\le t\le12$.  The
small examples in Table~\ref{tab:finite} illustrate the nonmonotonicity issue.

\begin{table}[ht]
\centering
\begin{tabular}{@{}ccc@{}}
\toprule
Path & total $t$ & exact stackability probability \\
\midrule
$P_2$ & $1$ & $1$ \\
$P_2$ & $2$ & $2/3$ \\
$P_2$ & $3$ & $1$ \\
$P_3$ & $5$ & $19/21$ \\
$P_3$ & $6$ & $25/28$ \\
$P_3$ & $7$ & $1$ \\
\bottomrule
\end{tabular}
\caption{Selected exact finite probabilities.  They are illustrations, not
inputs to Theorem~\ref{thm:main}.}
\label{tab:finite}
\end{table}

A separate exact catalogue checks the deep-message necessity theorem on
nonstackable paths through $n\le10$ and $t\le10$; it contains $307{,}646$
nonstackable configurations, including $47$ cases attaining the forced
message threshold exactly.  Deterministic tables also check the regeneration
window, finite dyadic counts, binary-partition indexing, and the fixed-offset
balance.  Historical Monte Carlo files remain in the repository as a record of
conjecture discovery, but no Monte Carlo estimate is used in the proof of
Theorem~\ref{thm:main}.

All generated datasets have sidecar metadata recording the command,
parameters, source revision, and whether the output is exact, deterministic,
or Monte Carlo.  This separation is important: the binary-partition and local
excursion asymptotics are analytic results, while the finite tables only check
the implementation and signs.

\section{Formalisation and reproducibility}\label{sec:formal}

The deterministic finite layer is partly formalised in Lean 4/Mathlib.  The
ProbStack development imports the TreeStack project at revision
\begin{center}
\texttt{f4112f08d42a37c0941bf469ac124621b1f54f22},
\end{center}
which supplies the exact rooted-score theorem~\eqref{eq:treestack-criterion}
and the categorical message semantics.  ProbStack then formalises the path
message scans, low-phase deficit identities, finite regeneration statements,
finite front certificates, weak-composition/product-mass algebra, finite
dyadic encodings, and the deep-message necessity theorem.

The two exact theorem names exposed for the latter are
\begin{align*}
 &\texttt{ProbStack.Palomar.nonstackable\_exists\_directedMessage\_le},\\
 &\texttt{ProbStack.Palomar.nonstackable\_total\_mul\_exists\_directedMessage\_le}.
\end{align*}
They were registered from immutable source commit
\begin{center}
\texttt{1897a76956168fba047f5943fa7998601f4fe459}
\end{center}
under Palomar record
\texttt{PALOMAR-2026-09-30-000023}, version~1.  The registration covers
Theorem~\ref{thm:deep-general} at its finite-path formal surface and the exact
fixed-total Corollary~\ref{cor:deep-fixed}.  It does \emph{not} cover the full
probabilistic Theorem~\ref{thm:main}, the global opposing-front implication,
the de Bruijn asymptotic, the product-geometric local asymptotic, or the
conditioning limit argument.  The Palomar record is a mechanical/formal
verification record for its stated Lean theorems; it is not peer review of this
paper and does not establish novelty of the probabilistic result.

The manuscript was audited against ProbStack \texttt{main} at revision
\begin{center}
\texttt{3953c09c7ca0508a95721f60c59912ca9c917478}.
\end{center}
Relative to the registered Palomar source, that revision differs only by the
public website metadata file; the mathematical and Lean sources registered
with Palomar are unchanged.  Paper preparation therefore does not require a
new Palomar run.

The repository is available at
\url{https://github.com/jfairfaxball-348/ProbStack-Random-Stacking-on-Trees}.
It includes exact build and reproduction commands for the Lean and Python
components, a theorem/proof dependency map, and source metadata for generated
data.  The project record also discloses substantial AI assistance
in conjecture development, computational validation, and Lean engineering;
the mathematical statements in this manuscript have been checked against the
version-controlled proof record, and responsibility for the manuscript remains
with the author.

\section{Discussion and open problems}\label{sec:discussion}

Theorem~\ref{thm:main} identifies the high-density fixed-offset transition but
deliberately stops short of a critical-window law.  Several natural questions
remain.

First, the zero-offset regime
$\log_2\mu_n-c_n\to0$ is open in this work.  A limiting law would require
control finer than the $o(x)$ error in the one-sided excursion rate and a more
detailed description of spatial clustering of near-critical fronts.

Second, finite-$n$ stackability probability is not monotone in total mass.  It
would be interesting to determine whether some eventual high-density
monotonicity holds on paths, but no such property is needed or claimed here.

Third, the proof is path-specific at the probabilistic level.  The rooted
score~\eqref{eq:treestack-criterion} exists for arbitrary finite trees, but the
one-dimensional dyadic recurrence, ordered front geometry, and terminal
classification used here do not transfer automatically to branching trees.
A random-stackability theory for broader tree families would need a genuinely
multibranch rare-event analysis.

Finally, the binary-partition input itself is classical.  What drives the
constant in Theorem~\ref{thm:main} is the way that asymptotic enters the exact
message dynamics, including parity, the $0\to0$ terminal route, and
constant-cost regeneration.  This separation between a classical counting
law and a model-specific structural mechanism may be useful in other lossy
resource-transfer processes on sparse graphs.

\section*{Acknowledgements}
The author thanks the developers and maintainers of Lean, Mathlib, and the
software used for the computational and formal verification workflow.  The
paper also relies on the deterministic TreeStack framework developed in the
companion project~\cite{TreeStack2026}.

\end{document}